\documentclass[11pt,a4paper]{article}

\usepackage{amsmath,amsthm}

\PassOptionsToPackage{backend=biber}{biblatex}
\usepackage{cgeometry}
\allowdisplaybreaks
\newcommand{\e}{\mathrm e}

\title{A counterexample to the zero-mass conjecture}
\author{Long Li \and Mingchen Xia}
\date{}

\begin{document}

\maketitle

\begin{abstract}
We construct an explicit negative plurisubharmonic function on the unit polydisc in $\mathbb C^n, n\ge 2$ with an isolated singularity at the origin. Its Monge--Amp\`{e}re measure is the Dirac mass at the origin, but its Lelong number vanishes. This gives a negative answer to the zero-mass conjecture of Guedj--Rashkovskii in every complex dimension at least two.
\end{abstract}

\section{Introduction}

Let $u$ be plurisubharmonic near $0\in\mathbb C^n$ and locally bounded on a punctured neighborhood of $0$. For such an isolated singularity, Demailly's construction always defines its Monge--Amp\`ere measure
\[
  \MA(u)=(\ddc u)^n
\]
as a positive Radon measure; see \cite[Chapter~III, Proposition~4.1 and Corollary~4.3]{DemaillyCADG}. Its residual Monge--Amp\`ere mass at the origin is
\[
  \tau(u,0)\coloneqq\MA(u)(\{0\}).
\]
The comparison theorem gives the fundamental lower bound $\nu(u,0)^n\le\tau(u,0)$; see, for example, \cite{Demailly1993}. Thus a positive Lelong number forces a positive residual mass. The converse question,
\[
  \nu(u,0)=0\quad\Longrightarrow\quad\tau(u,0)=0,
\]
is the Guedj--Rashkovskii zero-mass conjecture. It was already considered in \cite{Gue10} and \cite{Ras01}. It appears as Question~7 in \cite{DGZ15} and is discussed in detail in \cite{Ras16}.

In the last three decades, there have been many important contributions related to this conjecture; see, for example, \cite{BFJ08,KR21,Ras01,Ras06,Ras13,Ras16,Wik05}.
Several important special cases are known. The conjecture holds for isolated singularities with finite {\L}ojasiewicz exponent; in particular, it holds for isolated analytic and multicircled singularities \cite{Ras16}. Further sufficient conditions in the setting of Cegrell energy classes were obtained by {\AA}hag, Cegrell, and Ph\d{a}m \cite{ACP19}.

Another line of work exploits symmetry. The first-named author proved the conjecture for $S^1$-invariant isolated singularities in $\mathbb C^2$ \cite{Li24}, and He, Xu and the first-named author subsequently proved the corresponding result in arbitrary complex dimension \cite{HLX23}. In complex dimension two, the same authors established a residual-mass upper bound, and hence the zero-mass conclusion, under uniform directional Lipschitz continuity. They also showed that separation of the circular direction in the alternating part implies this regularity \cite{HLX25}.

More recently, Deng, Li, Liu, Wang and Zhou introduced the log truncated threshold and proved an optimal residual-mass estimate in terms of higher Lelong numbers when this threshold is finite. They also established a generic zero-mass theorem for parametrized maxima in Cegrell energy classes, with a pluripolar exceptional set \cite{DLLWZ26}.

The result below shows that without such additional structure the proposed implication fails already in complex dimension two. In fact, the entire Monge--Amp\`ere mass can remain concentrated at a pole whose Lelong number vanishes. For $N\ge1$ and $\zeta=(\zeta_1,\ldots,\zeta_N)\in\mathbb C^N$, write
\[
  \|\zeta\|_\infty=\max_{1\le\ell\le N}|\zeta_\ell|,
  \qquad
  \mathbb D^N=\{\zeta\in\mathbb C^N:\|\zeta\|_\infty<1\}.
\]
\begin{theorem}\label{thm:main}
There exists an explicitly defined negative function $u\in\PSH(\mathbb D^2)$ such that $\e^u$ is continuous on $\mathbb D^2$ and
\begin{equation}\label{eq:main}
  u^{-1}(-\infty)=\{0\},
  \qquad
  \nu(u,0)=0,
  \qquad
  \MA(u)=\delta_0.
\end{equation}
\end{theorem}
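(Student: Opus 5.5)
The plan is to build $u$ as a locally finite maximum of rescaled logarithms of polynomial maps. Each piece will carry Monge--Amp\`ere mass exactly $1$ at the origin but will have a small Lelong number. The pieces will be taken in rotating coordinate systems, so that the construction is not multicircled; Rashkovskii's theorem rules out any toric example. The basic building block, for an integer $m\ge1$ and a pair of linear forms $(\ell,\ell')$ forming coordinates, is
\[
  v_{m,\ell}(z)=\frac{1}{\sqrt m}\log\max\bigl(|\ell(z)|,|\ell'(z)|^{m}\bigr).
\]
By the standard formula for $\log\|F\|$, its Monge--Amp\`ere measure is $\frac1m\cdot e(F)\,\delta_0=\frac1m\cdot m\,\delta_0=\delta_0$. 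Its Lelong number at $0$ is $1/\sqrt m$.

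I would choose $m_k\to\infty$, radii $r_k\downarrow0$, constants $c_k$, and linear coordinates $(\ell_k,\ell_k')$ with rotating directions. Then I would set
\[
  u=\sup_k\bigl(v_{m_k,\ell_k}+c_k\bigr),
\]
normalized to be negative on $\mathbb D^2$. The constants are tuned so that on the shell $A_k=\{r_{k+1}\le\|z\|_\infty\le r_k\}$ only the indices $k$ and $k+1$ can realize the maximum. On a neighbourhood of the sphere $\{\|z\|_\infty=r_k\}$ only index $k$ is active. Local finiteness then gives plurisubharmonicity, continuity of $\e^u$ away from $0$, and $u^{-1}(-\infty)=\{0\}$, since $u\ge v_{m_1,\ell_1}+c_1$, which is finite off the origin. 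Continuity of $\e^u$ at $0$ will follow from $u\to-\infty$ there, because each active piece does.

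For $\nu(u,0)=0$, I would compute $\sup_{\|z\|=r}u\big/\log r$ as $r\to0$. On $A_k$ this ratio is at most of order $1/\sqrt{m_k}$, up to errors $c_k/\log r_k$. These errors can be made to vanish by choosing $r_k$ small relative to the $c_k$. The directions of $\ell_k$ are rotated so that the small Lelong numbers are not cancelled by a toric envelope; this is where the counterexample escapes \cite{Ras16}.

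The mass computation proceeds in two steps. First, $\MA(u)$ vanishes off $0$. Between two consecutive pieces, $u$ is locally of the form
\[
  \frac1N\log\max\bigl(|f_1|,|f_2|,|f_3|,|f_4|\bigr)+\mathrm{const},
\]
after raising all the holomorphic functions to common powers, using that $m_k$ is chosen so that the exponents are rational. The Monge--Amp\`ere measure of $\log$ of a maximum of moduli of holomorphic functions is supported on their common zero set. Here the common zero set is $\{0\}$, as long as each pair $(\ell_k,\ell_k')$ has only the origin as common zero. Second, the residual mass: near each sphere $S_k=\{\|z\|_\infty=r_k\}$ we have $u=v_{m_k,\ell_k}+c_k$. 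By Stokes' theorem, $\int_{\|z\|_\infty<r_k}(\ddc u)^2$ depends only on the germ of $u$ near $S_k$. Hence it equals $\int(\ddc v_{m_k,\ell_k})^2=1$. Letting $k\to\infty$ gives $\MA(u)(\{0\})=1$, and therefore $\MA(u)=\delta_0$.

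The main obstacle is the combinatorial tuning of $m_k$, $r_k$, $c_k$ and the rotations. These must simultaneously make exactly one piece dominant near each $S_k$, keep the transitions confined to the shells, and keep the Lelong ratio small on every shell rather than only on the spheres. In particular, the pieces must not fall into a regime where a different piece takes over along some complex direction. I would first try $m_k=4^k$ with $\ell_k=z_1-kz_2$ and $\ell_k'=z_2$, and verify the dominance inequalities direction by direction.
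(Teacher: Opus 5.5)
\textbf{There is a genuine gap: the locally finite maximum you describe cannot exist, for any choice of $c_k$, $r_k$ or rotations.}

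Take any $k$ with $m_{k+1}\ge m_k$. With your choice $m_k=4^k$ this is every $k$, and such $k$ exist for any infinite sequence of positive integers. Choose $a$ with $\ell_{k+1}(a)=0$ and $\ell'_{k+1}(a)=1$, and restrict to the complex line $L=\{\zeta a\}$. There
\[
  v_{m_{k+1},\ell_{k+1}}(\zeta a)=\sqrt{m_{k+1}}\,\log|\zeta|,
  \qquad
  v_{m_k,\ell_k}(\zeta a)=\frac{1}{\sqrt{m_k}}\log\max\bigl(|\ell_k(a)|\,|\zeta|,\ |\ell'_k(a)|^{m_k}|\zeta|^{m_k}\bigr).
\]
Both depend only on $t=\log|\zeta|$, and the second is a convex function of $t$ with slope at most $\sqrt{m_k}$. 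Also $\|\zeta a\|_\infty=|\zeta|\,\|a\|_\infty$. Hence
\[
  g(t)=\bigl(v_{m_{k+1},\ell_{k+1}}+c_{k+1}\bigr)-\bigl(v_{m_k,\ell_k}+c_k\bigr)
\]
is nondecreasing in $t$. Your requirement that only index $k+1$ is active near $S_{k+1}$ gives $g\ge0$ at the inner radius. Monotonicity then gives $g\ge0$ at the outer radius, which contradicts the requirement that only index $k$ is active near $S_k$.

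The constants $c_k$ only shift $g$, and the rotations do not help because the argument runs on the line $\{\ell_{k+1}=0\}$ whatever $\ell_k$ is. Equivalently, when $m_{k+1}>m_k$: the Riesz mass of $u|_L$ in a disc is read off from $u$ near the boundary circle and grows with the disc. Your pieces would make it $\sqrt{m_{k+1}}$ in the inner disc and at most $\sqrt{m_k}$ in the outer one.

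So the ``combinatorial tuning'' you single out as the main obstacle has no solution. The Stokes computation of the residual mass needs $u=v_{m_k,\ell_k}+c_k$ on a full neighbourhood of $S_k$, so it never gets started. Note also that the Lelong part of your plan is the trivial part: $u\ge v_{m_k,\ell_k}+c_k$ alone gives $\nu(u,0)\le m_k^{-1/2}$. All the difficulty sits in the gluing, and that is what fails.

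\textbf{What is missing is a mechanism that makes consecutive stages compatible.} In the paper the stages are $q_j=2^{-j}\log\|H_j\|_\infty$, where $H_j=F_{\varepsilon_j}\circ\cdots\circ F_{\varepsilon_1}$ and $F_\varepsilon(z,w)=(\frac12w^2,\frac12z^2+\varepsilon w)$.
\begin{itemize}
\item The degree $4^j$ of $H_j$ gives $\MA(q_j)=\delta_0$.
\item The surviving linear term $\mathrm dH_j(0)e_2\neq0$ gives $\nu(q_j,0)=2^{-j}$.
\item Each stage is the previous one composed with an almost purely quadratic map. This yields $-2^{-j}\log2\le q_{j+1}-q_j\le0$ on $\{q_j\ge-4^j\}$ and $q_{j+1}\le\max\{q_j,-4^j\}$ everywhere.
\end{itemize}
This near-monotonicity permits a decreasing construction $V_j=\max\{q_j,-4^j\}\downarrow u$ rather than a supremum. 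The unit mass is tracked by the sweeping formula, since it sits on $\{q_j=-4^j\}$, which shrinks to $0$, together with monotone continuity. The Lelong number is then no longer automatic. It is shown to vanish using witness points on the positive $e_2$-axis, at which every later cutoff is inactive.

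Independent building blocks such as your $v_{m,\ell}$ have no such compatibility. Their Lelong numbers along the lines $\{\ell=0\}$ are $\sqrt m\to\infty$, which is exactly what the obstruction above detects.
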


Thus $u$ is continuous as an extended-real-valued function; in particular, it is continuous outside its isolated pole. The function is obtained by an explicit recursive construction. The results cited above imply that $u$ is not $S^1$-invariant and does not satisfy the uniform directional Lipschitz condition of \cite{HLX25}. Moreover, if its log truncated threshold were finite, the residual-mass estimate of \cite[Theorem~1.3]{DLLWZ26} would imply $\tau(u,0)=0$, since $\nu(u,0)=0$. Hence its log truncated threshold is infinite.

The two-dimensional example also gives counterexamples in every higher dimension.

\begin{corollary}\label{cor:higher-dimensional}
For every $n\ge2$, there exists a negative function $u_n\in\PSH(\mathbb D^n)$ such that $\e^{u_n}$ is continuous on $\mathbb D^n$ and
\[
  u_n^{-1}(-\infty)=\{0\},
  \qquad
  \nu(u_n,0)=0,
  \qquad
  \MA(u_n)=\delta_0.
\]
\end{corollary}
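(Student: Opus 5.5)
Starting from the two-dimensional function $u$ of Theorem~\ref{thm:main}, I will add a pluricomplex Green function in the remaining variables. For $z=(z_1,\dots,z_n)\in\mathbb D^n$ write $z=(z',z'')$ with $z'\in\mathbb C^2$ and $z''\in\mathbb C^{n-2}$, and set
\[
  u_n(z)=\max\bigl(u(z'),\,\log\|z''\|_\infty\bigr),
\]
with the convention that $u_n=u(z')$ when $n=2$. Since both entries are negative and plurisubharmonic on $\mathbb D^n$, $u_n$ is negative and plurisubharmonic. Moreover $\e^{u_n}=\max(\e^{u(z')},\|z''\|_\infty)$ is continuous. The pole set is $\{z'=0\}\cap\{z''=0\}=\{0\}$, so the singularity is isolated and $\MA(u_n)$ is well defined by Demailly's construction.

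\emph{Lelong number.} We have $u_n\ge u(z')$. Lelong numbers are monotone in the function, and the Lelong number of $u(z')$ at $0\in\mathbb C^n$ equals $\nu(u,0)=0$. Indeed, $\sup_{\|z\|\le r}u(z')=\sup_{\|z'\|\le r}u$, so the characterization of $\nu$ via $\liminf\sup_{B_r}u/\log r$ gives the same value. Hence $\nu(u_n,0)=0$.

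\emph{Monge--Amp\`ere measure.} Off the origin I would show that $(\ddc u_n)^n=0$. On the open set $\{u(z')>\log\|z''\|_\infty\}$, the function $u_n$ depends only on $n-2+2$ variables, with $u$ depending on two of them. There the measure $(\ddc u(z'))^n$ is a wedge of $n$ currents pulled back from $\mathbb C^2$, which vanishes for $n\ge3$. On $\{u(z')<\log\|z''\|_\infty\}$, the function $\log\|z''\|_\infty$ is maximal in $n-2<n$ variables, so the measure vanishes there as well. The delicate part is the interface set. For this I would use the standard product formula for maxima of functions in separate variables: for $\phi$ plurisubharmonic on $\mathbb C^p$ and $\psi$ plurisubharmonic on $\mathbb C^q$, both locally bounded or with isolated poles,
\[
  \bigl(\ddc\max(\phi(x),\psi(y))\bigr)^{p+q}=(\ddc\phi)^p\otimes(\ddc\psi)^q .
\]
I would justify this by approximating $\phi$ and $\psi$ by continuous functions, for which the formula holds by Bedford--Taylor theory; this is the classical computation giving $(\ddc\log\|z\|_\infty)^n=\delta_0$. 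I would then pass to the limit using continuity of $\MA$ along decreasing sequences for functions with isolated poles. Applying the formula with $\phi=u$, $p=2$, and $\psi=\log\|z''\|_\infty$, $q=n-2$, gives
\[
  \MA(u_n)=\MA(u)\otimes\delta_0^{(n-2)}=\delta_0\otimes\delta_0=\delta_0 .
\]

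The main obstacle is the product formula at the poles. It needs care because Demailly's definition requires local boundedness off a compact set. I would approximate with $\max(u,-k)$ and $\max(\log\|z''\|_\infty,-k)$, which are bounded and continuous (continuity by the continuity of $\e^u$), so the formula holds for each $k$ by Bedford--Taylor. The approximants decrease to $u_n$, and weak convergence of Monge--Amp\`ere measures along decreasing sequences, valid for isolated singularities by \cite{DemaillyCADG}, yields the result.
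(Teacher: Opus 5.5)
There is a genuine gap in the Monge--Amp\`ere part; the rest of your proposal is fine. Your choice of $u_n$ is the paper's. For the Lelong number, your monotonicity argument via $u(z')\le u_n$ is a valid substitute for the paper's max formula. The truncate-and-take-limits scheme also matches the paper.

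The problem is the product formula you rely on, which is false as stated. $\bigl(\ddc\max(\phi(x),\psi(y))\bigr)^{p+q}=(\ddc\phi)^p\otimes(\ddc\psi)^q$ does not follow from Bedford--Taylor theory for bounded continuous functions. Take $p=q=1$, $\phi(x)=|x|^2$, $\psi(y)=|y|^2$. The left side vanishes on the open dense set $\{|x|\neq|y|\}$, where the max depends on one variable only, while the right side is a multiple of Lebesgue measure. It even fails for truncated logarithms cut at different levels. For $\phi=\max(\log|x|,-1)$ and $\psi=\max(\log|y|,-2)$, the right side lives on the torus $\{|x|=\e^{-1},|y|=\e^{-2}\}$. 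Near that torus $\max(\phi,\psi)=\phi$, so the left side vanishes there. Likewise, $(\ddc\log\|z\|_\infty)^n=\delta_0$ is not an instance of a general identity. So your step ``the formula holds for each $k$ by Bedford--Taylor'' is exactly where the interface contribution, which you rightly called delicate, goes uncontrolled.

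The missing ingredient is the hypothesis under which the max-product identity holds, together with its verification. The paper uses B{\l}ocki's theorem. If $\phi,\psi\ge0$ are bounded plurisubharmonic, $(\ddc\phi)^p$ puts no mass on $\{\phi>0\}$, and $(\ddc\psi)^q$ puts no mass on $\{\psi>0\}$, then the identity holds. You would apply it to $u_T=\max\{u,-T\}+T$ and $v_T=\max\{\log\|z''\|_\infty,-T\}+T$. This requires two support facts:
\begin{itemize}
\item $\MA(\max\{u,-T\})$ is supported on $\{u=-T\}$;
\item $(\ddc\max\{\log\|z''\|_\infty,-T\})^{n-2}$ is supported on $\{\log\|z''\|_\infty=-T\}$.
\end{itemize}
Both come from Demailly's sweeping formula (Lemma~\ref{lem:sweeping}). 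That lemma needs $\MA(u)=\delta_0$ and $(\ddc\log\|z''\|_\infty)^{n-2}=\delta_0$, continuity of $\e^u$, and a shrinking to $r\mathbb D^2\times r\mathbb D^{n-2}$ so that the relevant sublevel sets are relatively compact. The identity survives precisely because both functions are cut at the same level $-T$, where their measures concentrate. Your common truncation level $-k$ is therefore the right choice, but your argument needs to say why it works.

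With this inserted, your remaining steps coincide with the paper's proof. The product measures are probability measures supported in $\{u\le-T\}\times\{\|z''\|_\infty\le\e^{-T}\}$, which shrink to $0$. Monotone continuity at isolated poles then gives the limit $\delta_0$.
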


The construction is based on the polynomial maps
\[
  F_\varepsilon(z,w)=\left(\frac{1}{2}w^2,\frac{1}{2}z^2+\varepsilon w\right).
\]
We choose a rapidly decreasing sequence $\varepsilon_j>0$ and compose these maps to form $H_j=F_{\varepsilon_j}\circ\cdots\circ F_{\varepsilon_1}$. The quadratic terms make each $F_\varepsilon$ a finite map of degree $4$, and hence $H_j$ has degree $4^j$. The small linear term $\varepsilon w$ plays a different role: it preserves the direction $e_2=(0,1)$ under differentiation. Consequently, $H_j$ still has a nonzero linear term in that direction, even though its topological degree grows exponentially.

We turn the iterates into plurisubharmonic weights by setting $q_j=2^{-j}\log\|H_j\|_\infty$. Since $2^{-j}=(\deg H_j)^{-1/2}$, this normalization exactly compensates for the degree $4^j$ in complex dimension two and gives $\MA(q_j)=\delta_0$. On the other hand, the surviving linear term makes the minimum of the vanishing orders of the components of $H_j$ equal to one, and therefore $\nu(q_j,0)=2^{-j}\to0$. Thus the iterates separate Monge--Amp\`ere mass from Lelong number at each finite stage, but the functions $q_j$ need not form a decreasing sequence.

To produce a limit, we truncate at increasingly deep levels and set $V_j=\max\{q_j,-4^j\}$. The rapid decay of $\varepsilon_j$ is chosen so that $V_{j+1}\le V_j$, while the cutoff cores $\{q_j\le-4^j\}$ shrink to the origin. Demailly's sweeping formula shows that truncation does not remove the unit Monge--Amp\`ere mass: it moves that mass from the pole to the cutoff level $\{q_j=-4^j\}$. Since these level sets collapse to the origin, $\MA(V_j)\rightharpoonup\delta_0$. Monotone continuity then gives $\MA(u)=\delta_0$ for $u=\lim_{j\to\infty}V_j$.

It remains to prove that the limiting singularity has zero Lelong number, since Lelong numbers need not pass continuously to a decreasing limit. A direct lower bound along the positive real ray in the direction $e_2$ produces points $\zeta_j\to0$ at which every later cutoff is inactive and
\[
  \frac{u(\zeta_j)}{\log\|\zeta_j\|_\infty}\longrightarrow0.
\]
The liminf characterization of the Lelong number then gives $\nu(u,0)=0$.

\Cref{sec:demailly} records the required facts about plurisubharmonic functions with isolated poles, and \cref{sec:ex} gives the construction and proves \cref{thm:main} and \cref{cor:higher-dimensional}.

\subsection*{Acknowledgments}

An initial idea was constructed by the \href{https://github.com/frenzymath/Rethlas}{Rethlas agent} using the \texttt{gpt-5.6-sol} model. That construction involved complicated polynomial iterations. Guided by its underlying principles, the authors subsequently found the simpler construction presented here.

The authors thank Xiuxiong Chen for his continuous support and encouragement. They are especially grateful to Per~{\AA}hag for his careful reading of the manuscript and for suggesting the streamlined argument in \cref{sec:lelong}.

The first-named author is supported by the Open Foundation of the State Key Laboratory of Mathematical Sciences (Grant No.~SLMS 2025 KFKT TD 12).
The second-named author is supported by the National Key R\&D Program of China under Grant No.~2025YFA1018200.

\section{Preliminaries}\label{sec:demailly}

We collect three tools used in the construction: monotone continuity of the Monge--Amp\`ere operator for isolated poles, Demailly's sweeping formula, and the mass formula for logarithms of finite holomorphic maps. We also recall the characterization of Lelong numbers needed for the witness argument.

We use
\begin{equation}\label{eq:normalization}
  \mathrm{d}^{\mathrm{c}}=\frac{1}{2\pi\mathrm{i}}(\partial-\bar\partial),
  \qquad
  \ddc=\frac{\mathrm{i}}{\pi}\partial\bar\partial,
\end{equation}
so that $\ddc\log|z|=\delta_0$ in one complex dimension.

Throughout, weak convergence refers to Radon measures. For Radon measures on a domain $Y$, we write $\mu_k\rightharpoonup\mu$ if
\[
  \int_Y\chi\,\mathrm d\mu_k\longrightarrow
  \int_Y\chi\,\mathrm d\mu
  \qquad\text{for every }\chi\in C_c(Y).
\]
This is the meaning of $\rightharpoonup$ throughout the paper; positive top-degree currents are identified with their associated Radon measures.

Following \cite[Definition~1.1]{DGZ15}, we say that a plurisubharmonic function $w$ on a domain $X\subset\mathbb C^2$ has a \emph{well-defined Monge--Amp\`ere measure} if there is a Radon measure $\mu$ on $X$ such that, for every domain $V\Subset X$ and every sequence $w_k\in\PSH(V)\cap C^\infty(V)$ decreasing pointwise to $w|_V$, the measures $(\ddc w_k)^2\rightharpoonup\mu|_V$ on $V$. When this condition holds, the measure $\mu$ is uniquely determined, and we write $\MA(w)=\mu$.

We record a few standard facts in the normalization \eqref{eq:normalization}.
\begin{proposition}\label{prop:question7-product}
Let $X\subset\mathbb C^2$ be a domain and let $w\in\PSH(X)$ have discrete unbounded locus. Then $w\,\ddc w$ has locally finite mass, and
\[
  \mu_w\coloneqq\ddc\bigl(w\,\ddc w\bigr)
\]
is a positive Radon measure compatible with restriction. If $w_k\in\PSH(X)$ decreases pointwise to $w$, then
\[
  \mu_{w_k}\rightharpoonup\mu_w.
\]
For locally bounded $w_k$, the measure $\mu_{w_k}$ is the usual Bedford--Taylor product $(\ddc w_k)^2$ \cite{BT82}.

Consequently, $w$ has a well-defined Monge--Amp\`ere measure in the sense used above, and $\MA(w)=\mu_w$. For $c>0$,
\begin{equation}\label{eq:ma-homogeneity}
  \MA(cw)=c^2\MA(w).
\end{equation}
\end{proposition}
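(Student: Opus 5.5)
The proof follows Demailly's treatment in \cite[Chapter~III, Section~4]{DemaillyCADG}, specialised to $n=2$, where only one nonlinear step is needed. Since all statements are local and compatible with restriction, we may work on a small ball $B$ around a single point $a$ of the discrete unbounded locus. We may also assume $w<0$ on $B$ and $w$ bounded near $\partial B$.

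\textbf{Step 1: local finiteness of $w\,\ddc w$.} The current $\ddc w$ is a positive closed $(1,1)$-current with locally finite mass. Since $w\in L^1_{\mathrm{loc}}$ is merely upper semicontinuous, the product $w\,\ddc w$ has to be defined by a limiting argument. I would use $w^{(m)}=\max\{w,-m\}$, which is locally bounded. Bedford--Taylor theory then gives the measures $-w^{(m)}\ddc w^{(m)}\wedge\beta$, where $\beta=\ddc|z|^2$.

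Fix a cutoff $\chi$ equal to $1$ near $a$ and supported where $w$ is bounded near the support of $\mathrm d\chi$. Integration by parts (Stokes) moves all derivatives onto the region where $w$ is bounded. There Bedford--Taylor theory supplies a uniform-in-$m$ bound on $\int\chi\,(-w^{(m)})\,\ddc w^{(m)}\wedge\beta$. By monotone convergence, $w\,\ddc w=\lim_m w^{(m)}\ddc w^{(m)}$ has locally finite mass. This is exactly Demailly's Proposition~4.1, with unbounded locus $L$ satisfying $L\cap\operatorname{Supp}$ compact; a point trivially has zero-dimensional closure, so the hypotheses are met.

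\textbf{Step 2: positivity and continuity.} Define $\mu_w=\ddc(w\,\ddc w)$. The key claim is that $w_k\searrow w$ implies $w_k\ddc w_k\rightharpoonup w\,\ddc w$ weakly as currents, and hence $\mu_{w_k}\rightharpoonup\mu_w$.

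Away from $a$, all the $w_k$ are uniformly bounded below on compacts, since they dominate $w$. Bedford--Taylor monotone continuity therefore applies there. Near $a$, the same Stokes estimate as in Step 1 gives a uniform bound on the mass of $w_k\ddc w_k$ near $a$. This controls any limit current up to its behaviour near $a$. The difference of two limits is then a current of order zero supported at $\{a\}$. Since a bidimension-$(1,1)$ current of order zero cannot be carried by a point (Federer support theorem / dimension count), the difference vanishes. Hence the limit equals $w\,\ddc w$.

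This step is the main obstacle, and I would follow Demailly's Corollary~4.3 closely. Positivity of $\mu_w$ then follows as a weak limit of the positive measures $(\ddc w_k)^2$ for smooth $w_k$. The identification $\mu_{w_k}=(\ddc w_k)^2$ for bounded $w_k$ is the Bedford--Taylor definition $(\ddc v)^2\coloneqq\ddc(v\,\ddc v)$ \cite{BT82}.

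\textbf{Step 3: consequences.} Apply Step 2 to smooth psh $w_k\searrow w$ on $V\Subset X$; these are obtainable by convolution on slightly smaller domains, and restriction compatibility handles the domain issue. This gives $(\ddc w_k)^2=\mu_{w_k}\rightharpoonup\mu_w|_V$, so $\MA(w)=\mu_w$ in the sense of \cite{DGZ15}. Finally, homogeneity is immediate from bilinearity of the construction: $cw\,\ddc(cw)=c^2\,w\,\ddc w$, and hence $\MA(cw)=c^2\MA(w)$.
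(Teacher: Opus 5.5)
Your route is the paper's. The paper cites Demailly's Proposition~4.1 with $T=\ddc w$ for finite mass and Corollary~4.3 (with $p=q=2$, $T=1$) for monotone continuity, after noting that $w_k\ge w$ keeps the unbounded locus of each $w_k$ inside that of $w$. Citing Corollary~4.3 outright would suffice. The problem is that the argument you actually give for Step~2 fails.

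\textbf{The gap in Step~2.} It is false that a bidimension-$(1,1)$ current of order zero cannot be carried by a point. For example, $\delta_a\,\mathrm{i}\,\mathrm{d}z_1\wedge\mathrm{d}\bar z_1$ is a nonzero, even positive, order-zero $(1,1)$-current supported at $a$. Federer-type support theorems need normality (for instance closedness), not just order zero. The currents $w_k\,\ddc w_k$ are not closed, and you have no control on $\mathrm{d}(w_k\,\ddc w_k)=\mathrm{d}w_k\wedge\ddc w_k$ near $a$. So your sketch does not exclude an extra Dirac-type piece at $a$ in the limit, which is exactly what must be ruled out. Positivity of $\mu_w$, which you deduce from this step, is therefore also unproved.

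\textbf{A slip in Step~1.} The currents $w^{(m)}\ddc w^{(m)}$ are not monotone in $m$, so monotone convergence does not apply to them. No limiting definition is needed: $w\,\ddc w$ is $w$ times the measure coefficients of the fixed current $T=\ddc w$. The content of Proposition~4.1 is $\int\chi(-w)\,\ddc w\wedge\beta<\infty$. Run your Stokes estimate on $\int\chi(-w^{(m)})\,T\wedge\beta$ with $T$ held fixed; then monotone convergence is legitimate.

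\textbf{A repair.} Do the point-support argument for measures rather than for currents, and prove positivity first.
\begin{itemize}
\item Positivity: keep $T=\ddc w$ fixed and let $w^\varepsilon=w*\rho_\varepsilon\downarrow w$. Dominated convergence, using Step~1, gives $w^\varepsilon T\to wT$, so $\mu_w=\lim\ddc w^\varepsilon\wedge T\ge0$. The same applies to each $w_k$, whose unbounded locus lies in that of $w$.
\item Uniform mass and pinning: fix $\theta\in C_c^\infty$ with $\theta\equiv1$ near $a$. The integral $\int\theta\,\mu_{w_k}=\int w_k\,\ddc w_k\wedge\ddc\theta$ sees only the region where $w\le w_k\le w_1$ are uniformly bounded. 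So the masses of $\mu_{w_k}$ near $a$ are uniformly bounded, and by Bedford--Taylor these integrals tend to $\int\theta\,\mu_w$.
\item Identification: any subsequential weak limit $\nu$ of $(\mu_{w_k})$ agrees with $\mu_w$ on $X\setminus\{a\}$, again by Bedford--Taylor. Hence $\nu-\mu_w=c\,\delta_a$; for signed measures the point-support argument is valid. Testing against $\theta$ gives $c=0$, so $\mu_{w_k}\rightharpoonup\mu_w$.
\end{itemize}
With this in place, your Step~3 and the homogeneity go through as written.
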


\begin{proof}
The finite-mass statement is \cite[Chapter~III, Proposition~4.1]{DemaillyCADG}, applied with $T=\ddc w$; the Stein boundary-cover hypothesis there is automatic for a discrete unbounded locus. The convergence is the case $p=q=2$ and $T=1$ of \cite[Chapter~III, Corollary~4.3]{DemaillyCADG}. Notice that $w_k\ge w$, so the unbounded locus of $w_k$ is contained in that of $w$. Positivity, restriction compatibility, and the identity in \eqref{eq:ma-homogeneity} follow from the same construction.
\end{proof}

\begin{lemma}\label{lem:sweeping}
Let $p\ge1$, let $X\subset\mathbb C^p$ be a pseudoconvex domain containing $0$, and let $\varphi\in\PSH(X)$ have discrete unbounded locus. Suppose that $\e^\varphi$ is continuous, and 
\[
  (\ddc\varphi)^p=m\delta_0 \quad\textup{for some }m\ge0,\qquad \varphi(0)=-\infty,
\]
with $\{\varphi<R\}\Subset X$ for some $R\in\mathbb R$. If $r<R$, then $(\ddc\max\{\varphi,r\})^p$ is supported on $\{\varphi=r\}$ and has total mass $m$.
\end{lemma}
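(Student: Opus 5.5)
Set $\psi\coloneqq\max\{\varphi,r\}$. Since $\e^\varphi$ is continuous, $\psi$ is continuous and locally bounded, so $(\ddc\psi)^p$ is the Bedford--Taylor measure. The proof has three steps: locate the support of $(\ddc\psi)^p$, compare total masses via Stokes, and show the mass on the level set $\{\varphi=r\}$ equals $m$.

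\emph{Support.} On the open set $\{\varphi<r\}$ we have $\psi\equiv r$, so $(\ddc\psi)^p=0$ there. On the open set $\{\varphi>r\}$ we have $\psi=\varphi$, and this set avoids the pole. Hence $(\ddc\psi)^p=(\ddc\varphi)^p=m\delta_0$ restricted to a set not containing $0$, which is zero. Here we use that Bedford--Taylor products are local in the plurifine topology, and on open sets locality is elementary. Therefore $\operatorname{supp}(\ddc\psi)^p\subset\{\varphi=r\}$, which is compact because $\{\varphi=r\}\subset\{\varphi<R\}\Subset X$.

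\emph{Mass.} Pick $r<s<R$ and set $\theta\coloneqq\max\{\varphi,s\}$. Near $\partial\{\varphi<R\}$, and in fact on $\{\varphi>s\}$, the functions $\psi$, $\theta$ and $\varphi$ all coincide.

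\begin{itemize}
\item First compare $\psi$ and $\theta$. Both are locally bounded and agree outside the compact set $K=\{\varphi\le s\}$. The standard Stokes argument gives $\int(\ddc\psi)^p=\int(\ddc\theta)^p$. Concretely, take a cutoff $\chi\equiv1$ near $K$ supported in $\{\varphi<R\}$ and write
\[
(\ddc\psi)^p-(\ddc\theta)^p=\ddc(\psi-\theta)\wedge T
\]
with $T=\sum_k(\ddc\psi)^k\wedge(\ddc\theta)^{p-1-k}$. Since $\psi-\theta$ is compactly supported, integrating against $\chi$ and moving $\ddc$ onto $\chi$ gives zero, because $\ddc\chi$ vanishes where $\psi\neq\theta$.
\item Next compare with $\varphi$ itself, which is needed to get the value $m$. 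Let $\varphi_k\coloneqq\max\{\varphi,-k\}$ for $k>-r$. By the first bullet all these have the same total mass as $\psi$. They decrease to $\varphi$, so by monotone continuity (\cref{prop:question7-product} for $p=2$, and Demailly's \cite[Chapter~III, Corollary~4.3]{DemaillyCADG} in general) $(\ddc\varphi_k)^p\rightharpoonup(\ddc\varphi)^p=m\delta_0$. All these measures are supported in the fixed compact set $K$. Testing against a $\chi\in C_c$ equal to $1$ on a neighbourhood of $K$ shows the common mass equals $m$.
\end{itemize}

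The main obstacle is the mass identity in the Stokes step, where we must justify that $\int\ddc\chi\wedge(\psi-\theta)T=0$ for a current $T$ built from locally bounded potentials. This holds because $\psi-\theta$ is bounded, vanishes on $\operatorname{supp}\ddc\chi$, and the Bedford--Taylor integration by parts is valid for bounded plurisubharmonic functions. The weak-convergence step needs no more than the compact support of the measures, so that weak convergence against a cutoff equal to $1$ on $K$ captures the full mass.
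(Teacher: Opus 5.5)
Your proof is correct, but it takes a different route from the paper's.

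\textbf{The paper's route.} The paper proves the lemma in one step by quoting Demailly's swept-out measure at level $r$. The measure $(\ddc\max\{\varphi,r\})^p-\mathbf 1_{\{\varphi\ge r\}}(\ddc\varphi)^p$ is positive, carried by $\{\varphi=r\}$, and has mass $\int_{\{\varphi<r\}}(\ddc\varphi)^p$. Since $(\ddc\varphi)^p=m\delta_0$ and $0\in\{\varphi<r\}$, the subtracted term vanishes and the mass is $m$.

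\textbf{Your route.} You re-derive this special case from standard tools, in three steps.
\begin{enumerate}
\item The support statement comes from locality of the Bedford--Taylor product on the open sets $\{\varphi<r\}$ and $\{\varphi>r\}$. This is where continuity of $\e^\varphi$ enters, since it makes both sets open.
\item The total mass is independent of the truncation level. You get this from the telescoping identity and integration by parts against a cutoff equal to $1$ near $\{\varphi\le s\}$. This is where $\{\varphi<R\}\Subset X$ enters.
\item The common mass equals $m$. You get this from monotone continuity as the level tends to $-\infty$, tested against the same cutoff.
\end{enumerate}
One fact is used tacitly in step 1: on $\{\varphi>r\}$, where $\varphi$ is locally bounded, Demailly's product agrees with the Bedford--Taylor product. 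This is standard.

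\textbf{Comparison.} The paper's version is shorter and rests on a formula valid for general plurisubharmonic $\varphi$, with no atomicity or continuity assumption. However, it relies entirely on a specific citation. Yours uses only Bedford--Taylor integration by parts and locality, plus the monotone continuity already invoked in Proposition~\ref{prop:question7-product}. It also makes clear exactly where each hypothesis of the lemma is used.
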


\begin{proof}
Demailly's swept-out measure at level $r$ is
\[
  \left(\ddc\max\{\varphi,r\}\right)^p-\mathbf 1_{\{\varphi\ge r\}}(\ddc\varphi)^p.
\]
It is positive, supported on $\{\varphi=r\}$, and has mass $\int_{\{\varphi<r\}}(\ddc\varphi)^p$ \cite[Chapter~III, formulas~(6.1)--(6.2)]{DemaillyCADG}. Here the restriction term vanishes and the last integral equals $m$, because the entire measure $m\delta_0$ lies in $\{\varphi<r\}$.
\end{proof}

\begin{lemma}\label{lem:finite-mass}
Let $G\colon U\to W$ be a finite holomorphic map of degree $d$ between domains in $\mathbb C^2$, each containing $0$, and suppose that $G^{-1}(0)=\{0\}$. Then
\begin{equation}\label{eq:finite-mass}
  \MA\bigl(\log\|G\|_\infty\bigr)=d\delta_0
\end{equation}
on $U$.
\end{lemma}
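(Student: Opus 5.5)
We prove \eqref{eq:finite-mass} by reducing to the pullback formula for the model weight $\log\|\zeta\|_\infty$ on $W$. Put $\psi(\zeta)=\log\|\zeta\|_\infty$ on $\mathbb C^2$. First we check that $\MA(\psi)=\delta_0$. The function $\psi$ is plurisubharmonic, $\e^\psi$ is continuous, $\psi$ is maximal off the origin, and $\psi=\max\{\log|\zeta_1|,\log|\zeta_2|\}$. For $r<0$ we apply \cref{lem:sweeping} to $\max\{\psi,r\}$; alternatively we compute directly that $(\ddc\max\{\log|\zeta_1|,\log|\zeta_2|,r\})^2$ is the normalized Haar measure on the torus $\{|\zeta_1|=|\zeta_2|=\e^r\}$, which has mass $1$. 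Letting $r\to-\infty$ and invoking the monotone continuity of \cref{prop:question7-product}, we get $\MA(\psi)=\delta_0$.

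Next we pull back. Set $w=\psi\circ G=\log\|G\|_\infty$. This function is plurisubharmonic, its unbounded locus is $G^{-1}(0)=\{0\}$, and hence $\MA(w)$ is well defined by \cref{prop:question7-product}. Take $\psi_r=\max\{\psi,r\}$, which decreases to $\psi$ as $r\downarrow-\infty$. Then $\psi_r\circ G$ decreases to $w$, so
\[
  \MA(w)=\lim_{r\to-\infty}\bigl(\ddc(\psi_r\circ G)\bigr)^2 .
\]
For bounded $\psi_r$ we use the pullback identity $(\ddc(\psi_r\circ G))^2=G^*(\ddc\psi_r)^2$. On the open set $U'$ where $G$ is locally biholomorphic, this is just the chain rule. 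The branch locus of a finite map is an analytic curve, which is pluripolar, and Bedford--Taylor measures of bounded functions do not charge pluripolar sets. The identity therefore holds as measures once we define $G^*\mu$ for a measure $\mu$ on $W$ by $\int\chi\,\mathrm dG^*\mu=\int G_*\chi\,\mathrm d\mu$, where $G_*\chi(y)=\sum_{G(x)=y}\chi(x)$ counted with multiplicity. If necessary we justify this more carefully by smoothing $\psi_r$ by convolution: for smooth $v$ the identity $(\ddc(v\circ G))^2=G^*(\ddc v)^2$ is pointwise, and we then pass to the limit using Bedford--Taylor continuity along decreasing sequences.

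It remains to localize. $(\ddc\psi_r)^2$ is the unit-mass torus measure $\sigma_r$ on $\{\|\zeta\|_\infty=\e^r\}$. Fix $\chi\in C_c(U)$. Because $G$ is finite and $G^{-1}(0)=\{0\}$, the preimages $G^{-1}(\{\|\zeta\|_\infty\le\e^r\})\cap U$ shrink to the origin; properness of $G$ near $0$ gives this on a small neighbourhood, and we take $U$ to be that neighbourhood. The function $G_*\chi$ is continuous, and its value at $0$ is $d\,\chi(0)$, since the local degree of $G$ at its unique preimage point $0$ equals $d$. Therefore
\[
  \int\chi\,\mathrm d\bigl(\ddc(\psi_r\circ G)\bigr)^2=\int G_*\chi\,\mathrm d\sigma_r\longrightarrow d\,\chi(0),
\]
and consequently $\MA(w)=d\delta_0$.

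The main obstacle is the pullback identity across the branch locus together with the continuity of $G_*\chi$. Both are standard for finite (proper, open) maps: the fibre cardinality counted with multiplicity is locally constant, equal to $d$. The approach above handles them by smoothing and by the fact that pluripolar sets are negligible.
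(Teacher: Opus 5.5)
Your proof is correct, and its skeleton is the paper's. Both arguments prove a change-of-variables identity for finite maps for smooth weights and extend it to bounded weights by regularization and Bedford--Taylor continuity. Both then apply it to the truncations $\max\{\psi,r\}$ and conclude by monotone continuity.

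The difference is that you dualize the key identity. The paper proves only the push-forward identity $G_*\MA(\psi\circ G)=d\,\MA(\psi)$, i.e.\ it tests $\MA(\psi\circ G)$ only against functions of the form $\chi\circ G$. It passes to the limit to get $G_*\MA(\log\|G\|_\infty)=d\delta_0$. It then localizes by a purely measure-theoretic remark: a positive measure on $U$ whose push-forward under a map with $G^{-1}(0)=\{0\}$ is $d\delta_0$ must itself be $d\delta_0$.

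You instead prove the full pullback identity $\MA(\psi_r\circ G)=G^*\MA(\psi_r)$ against arbitrary $\chi\in C_c(U)$ via the trace $G_*\chi$. You localize already at the finite stage, using continuity of the trace together with $G_*\chi(0)=d\,\chi(0)$.

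The paper's route is more economical. It uses only the generic fibre count, and the fact that all the mass sits at $0$ with weight $d$ is an output rather than an input. Yours needs two extra standard facts about branched coverings: continuity of the trace, and local multiplicity $d$ at the unique preimage of $0$. In return it yields a stronger and more local statement, namely $\MA(\psi\circ G)=G^*\MA(\psi)$ for every bounded plurisubharmonic weight $\psi$.

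Three minor points:
\begin{itemize}
\item Applying the sweeping lemma to $\max\{\psi,r\}$ cannot by itself give $\MA(\psi)=\delta_0$, because that lemma takes the value of the atomic mass as a hypothesis. It is your torus computation that supplies the mass $1$.
\item There is no need to shrink $U$ in the last step. The trace computation already handles every $\chi\in C_c(U)$, including those supported away from $0$.
\item In the pluripolar version of the pullback identity, you should also note that $G(B)$ is an analytic curve in $W$ by the proper mapping theorem. Hence $G^*(\ddc\psi_r)^2$ does not charge the branch locus $B$ either.
\end{itemize}
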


\begin{proof}
Demailly's max-norm computation gives $\MA(\log\|\cdot\|_\infty)=\delta_0$ \cite[Chapter~III, Corollary~7.4]{DemaillyCADG}. For a smooth plurisubharmonic weight $\psi$ on $W$, change of variables for a finite map, with local multiplicities counted, gives
\[
  G_*\MA(\psi\circ G)=d\,\MA(\psi).
\]
Here the push-forward is defined by $(G_*\mu)(E)=\mu(G^{-1}(E))$ for Borel sets $E\subset W$.
Local regularization and Bedford--Taylor continuity extend this identity to every locally bounded plurisubharmonic weight $\psi$ on $W$ \cite[Chapter~III, Theorem~3.7]{DemaillyCADG}.

Put $\phi=\log\|\cdot\|_\infty$ and apply the identity to the locally bounded weights $\psi_T=\max\{\phi,-T\}$. Since $\psi_T\downarrow\phi$, \cref{prop:question7-product} gives
\[
  \MA(\psi_T)\rightharpoonup\delta_0, \qquad \MA(\psi_T\circ G)\rightharpoonup\MA(\phi\circ G)
\]
as $T\to\infty$. Passing to the limit in the push-forward identity gives
\[
  G_*\MA(\phi\circ G)=d\delta_0.
\]
The measure on the left is positive, and $G^{-1}(0)=\{0\}$; hence $\MA(\phi\circ G)$ is supported at $0$ and has mass $d$. This is \eqref{eq:finite-mass}.
\end{proof}

\begin{lemma}\label{lem:demailly-lelong}
Let $v$ be plurisubharmonic near $0\in\mathbb C^2$. Then
\begin{equation}\label{eq:lelong-liminf}
  \nu(v,0)=\liminf_{\zeta\to 0} \frac{v(\zeta)}{\log\|\zeta\|_\infty}.
\end{equation}
Moreover, if $f$ is a nonzero holomorphic germ, then
\begin{equation}\label{eq:lelong-order}
  \nu(\log|f|,0)=\operatorname{ord}_0 f.
\end{equation}
Consequently, if $F=(f_1,\ldots,f_N)$ is a holomorphic germ such that $f_\ell\not\equiv0$ and $f_\ell(0)=0$ for every $\ell$, then
\begin{equation}\label{eq:lelong-holomorphic-map}
  \nu(\log\|F\|_\infty,0)=\min_{1\le\ell\le N}\operatorname{ord}_0 f_\ell.
\end{equation}
Also $\nu(cv,0)=c\nu(v,0)$ for every $c>0$.
\end{lemma}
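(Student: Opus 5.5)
The liminf formula \eqref{eq:lelong-liminf} is essentially Demailly's comparison of the Lelong number with the growth of the maximum over shrinking polydiscs. I will therefore derive it from the classical identity
\[
  \nu(v,0)=\lim_{r\to0}\frac{M(v,r)}{\log r},
  \qquad M(v,r)\coloneqq\sup_{\|\zeta\|_\infty\le r}v,
\]
which holds for the max-norm because $\log\|\cdot\|_\infty$ is comparable to $\log|\cdot|$ up to a bounded additive constant. This is \cite[Chapter~III]{DemaillyCADG}, using Kiselman's convexity of $M(v,\e^t)$ in $t$. The statement is local and every term is invariant under adding a constant, so I may assume $v<0$ near $0$.

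\textbf{Step 1: the inequality $\ge$.} For every $\zeta$ with $\|\zeta\|_\infty=r$ we have $v(\zeta)\le M(v,r)$, and $\log r<0$. Hence
\[
  \frac{v(\zeta)}{\log r}\ge\frac{M(v,r)}{\log r}\longrightarrow\nu(v,0),
\]
so the liminf is at least $\nu(v,0)$.

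\textbf{Step 2: the inequality $\le$.} On the compact closed polydisc, the upper semicontinuous function $v$ attains $M(v,r)$ at some point $\zeta_r$. By the maximum principle for plurisubharmonic functions, $\zeta_r$ can be chosen on the distinguished boundary, so $\|\zeta_r\|_\infty=r$. Along the points $\zeta_r\to0$ the quotient equals $M(v,r)/\log r$, which tends to $\nu$. This proves equality in \eqref{eq:lelong-liminf}.

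\textbf{Step 3: the order formula \eqref{eq:lelong-order}.} This is the standard Lelong number of a divisor. Let $m=\operatorname{ord}_0f$. The bound $|f|\le C\|\zeta\|_\infty^m$ gives $\nu\ge m$. For the reverse, I restrict to a generic complex line $L$ through $0$ on which the leading homogeneous part $f_m$ does not vanish. Then $\log|f(\lambda a)|=m\log|\lambda|+O(1)$, and \eqref{eq:lelong-liminf} evaluated along that line gives $\nu\le m$.

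\textbf{Step 4: the formula \eqref{eq:lelong-holomorphic-map} for maps.} Write $m=\min_\ell\operatorname{ord}_0 f_\ell$. Each $|f_\ell|\le C\|\zeta\|_\infty^m$, so $\log\|F\|_\infty\le m\log\|\zeta\|_\infty+C$, which gives $\nu\ge m$. Conversely, $\log\|F\|_\infty\ge\log|f_{\ell_0}|$ for an index $\ell_0$ attaining the minimum. Since the Lelong number is monotone decreasing in the function (immediate from \eqref{eq:lelong-liminf}, as $\log r<0$), this gives $\nu\le m$.

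\textbf{Step 5: homogeneity.} The identity $\nu(cv,0)=c\,\nu(v,0)$ follows directly from \eqref{eq:lelong-liminf}.

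The only step that is not routine is the classical limit identity for $M(v,r)/\log r$ with the max-norm. I will cite Demailly for it rather than reprove it.
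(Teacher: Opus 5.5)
Your proposal is correct and follows essentially the same route as the paper: the liminf formula comes from Demailly's characterization of $\nu(v,0)$ as $\lim_{r\to0}\sup_{\|\zeta\|_\infty\le r}v/\log r$, transferred to the max-norm by comparability of the weights; the map formula comes from the sandwich $\log|f_{\ell_0}|\le\log\|F\|_\infty\le m\log\|\zeta\|_\infty+O(1)$; and homogeneity is read off directly from the liminf. The only difference is that you prove $\nu(\log|f|,0)=\operatorname{ord}_0 f$ by hand, via the Taylor bound together with restriction to a line on which the leading homogeneous part of $f$ does not vanish, whereas the paper cites the Lelong--Poincar\'e formula; your argument is a valid and more elementary substitute.
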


\begin{proof}
Formula \eqref{eq:lelong-liminf} follows from \cite[Chapter~III, formulas~(6.9c)--(6.9e) and Corollary~7.3]{DemaillyCADG}, and \eqref{eq:lelong-order} is the Lelong--Poincar\'e special case recorded in \cite[Chapter~III, \S6.10]{DemaillyCADG}. For \eqref{eq:lelong-holomorphic-map}, let $m=\min_\ell\operatorname{ord}_0 f_\ell$ and choose $\ell_0$ with $\operatorname{ord}_0 f_{\ell_0}=m$. Near the origin,
\[
  \log|f_{\ell_0}| \le \log\|F\|_\infty \le m\log\|\zeta\|_\infty+O(1),
\]
and the two preceding formulas give the claim. The homogeneity is immediate from \eqref{eq:lelong-liminf}.
\end{proof}

\section{The example}\label{sec:ex}

\subsection{Finite-stage potentials}

For $0<\varepsilon\le\frac{1}{4}$, put
\[
  F_\varepsilon(z,w)
  =\left(\frac{1}{2}w^2,\frac{1}{2}z^2+\varepsilon w\right).
\]
If $r=\|(z,w)\|_\infty\le1$, then
\[
  \|F_\varepsilon(z,w)\|_\infty \le\frac{1}{2}r^2+\frac{1}{4}r \le\frac{3}{4}<1.
\]
Thus $F_\varepsilon(\overline{\mathbb D^2})\subset\mathbb D^2$.
Set $e_2=(0,1)$, the direction in which a linear term will survive.

For each $j\ge1$, set
\[
  \varepsilon_j=\frac{1}{4}\e^{-8^{j-1}},
  \qquad
  c_j=\prod_{k=1}^j\varepsilon_k=4^{-j}\e^{-\frac{8^j-1}{7}}.
\]

\begin{lemma}\label{lem:algebra}
The map $F_\varepsilon\colon\mathbb C^2\to\mathbb C^2$ is finite of degree $4$, with zero fiber $\{0\}$. If
\[
  F_j=F_{\varepsilon_j}, \qquad H_j=F_j\circ\cdots\circ F_1,
\]
then
\begin{equation}\label{eq:degree}
  H_j^{-1}(0)=\{0\},
  \qquad
  \deg H_j=4^j.
\end{equation}
Moreover, the local degree of $H_j$ at the origin is $4^j$, and
\begin{equation}\label{eq:linear}
  \mathrm dH_j(0)e_2=c_j e_2.
\end{equation}
For every $t\in(0,1)$,
\begin{equation}\label{eq:positive-axis}
  \left\|H_j(t e_2)\right\|_\infty\ge c_j t.
\end{equation}
\end{lemma}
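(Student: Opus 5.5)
We begin with a single map $F_\varepsilon$. The zero fiber is read off directly. If $F_\varepsilon(z,w)=0$, then $w^2=0$, so $w=0$. The second equation then gives $\tfrac12 z^2=0$, so $z=0$.

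For finiteness and the degree, we solve $F_\varepsilon(z,w)=(a,b)$ for a general target. The first equation $w^2=2a$ gives two values of $w$ when $a\neq0$. For each such $w$, the second equation $z^2=2(b-\varepsilon w)$ gives two values of $z$, and generically $b\ne\varepsilon w$. So a generic fiber has exactly $4$ points.

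Properness of $F_\varepsilon$ on $\mathbb C^2$ comes from an estimate. If $|w|$ is large, then $|\tfrac12w^2|$ is large. If $|w|$ stays bounded while $|z|\to\infty$, then $|\tfrac12 z^2+\varepsilon w|\to\infty$. Hence $\|F_\varepsilon\|_\infty\to\infty$ as $\|(z,w)\|\to\infty$, so $F_\varepsilon$ is proper with finite fibers. Equivalently, $\mathbb C[z,w]$ is a finite module over $\mathbb C[w^2,\tfrac12z^2+\varepsilon w]$, generated by $1,z,w,zw$, which gives degree $4$ by Bézout. The local degree at $0$ equals the intersection multiplicity of $w^2$ and $\tfrac12z^2+\varepsilon w$ at the origin. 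Using $w=-z^2/(2\varepsilon)$, this multiplicity is $\operatorname{ord}_z (z^4/(4\varepsilon^2))=4$. Since $0$ is the only preimage of $0$, the local degree equals the global degree $4$.

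For $H_j$ we induct on $j$. A composition of finite proper maps is finite and proper, and degrees multiply, so $\deg H_j=4^j$. If $H_j(\zeta)=0$, then $F_j(H_{j-1}(\zeta))=0$, so $H_{j-1}(\zeta)=0$, and by induction $\zeta=0$. Local degrees also multiply under composition at a point whose preimage is isolated, so the local degree is $4^j$. This is also consistent with $H_j^{-1}(0)=\{0\}$, since the global degree is concentrated at that one point.

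The linear term follows from the chain rule. We have $\mathrm dF_\varepsilon(0)=\begin{pmatrix}0&0\\0&\varepsilon\end{pmatrix}$, which maps $e_2$ to $\varepsilon e_2$. Since $H_j(0)=0$, the chain rule gives $\mathrm dH_j(0)e_2=\varepsilon_j\cdots\varepsilon_1 e_2=c_je_2$.

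The main step is the lower bound \eqref{eq:positive-axis}, because it must hold uniformly for $t\in(0,1)$ and not only infinitesimally. We use positivity: $F_\varepsilon$ maps the closed real quadrant $\{z\ge0,\,w\ge0\}$ into itself. Write $H_k(te_2)=(x_k,y_k)$. Then $x_k,y_k\ge0$ and
\[
y_{k}=\tfrac12x_{k-1}^2+\varepsilon_k y_{k-1}\ge\varepsilon_k y_{k-1}.
\]
Starting from $y_0=t$, this gives $y_j\ge c_jt$. Hence $\|H_j(te_2)\|_\infty\ge y_j\ge c_jt$. Since every term is nonnegative, no cancellation can occur, and this is the key point. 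The only care needed is to keep all iterates in the real nonnegative quadrant, which holds because both components of $F_\varepsilon$ have nonnegative coefficients.
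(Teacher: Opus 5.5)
Your proposal is correct and follows essentially the same route as the paper: fiber computations and finiteness for $F_\varepsilon$, multiplicativity of degree and induction on the zero fiber for $H_j$, the chain rule for \eqref{eq:linear}, and the nonnegative real recursion $y_k\ge\varepsilon_k y_{k-1}$ for \eqref{eq:positive-axis}. Your small variations are valid alternatives. You prove finiteness by a properness estimate rather than the paper's integrality argument, and you get the local degree by computing the intersection multiplicity of $F_\varepsilon$ at $0$ and multiplying. The paper instead uses that local multiplicities in the zero fiber $\{0\}$ sum to the global degree $4^j$.
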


\begin{proof}
Write
\[
  F_\varepsilon(z,w)=(\alpha,\beta).
\]
Then
\begin{equation}\label{eq:finite-fiber}
  w^2=2\alpha,\qquad z^2=2\beta-2\varepsilon w.
\end{equation}
Thus $w$ is integral over $\mathbb C[\alpha,\beta]$, while $z$ is integral over $\mathbb C[\alpha,\beta,w]$. By transitivity of integrality, $\mathbb C[z,w]$ is integral, and hence finite, over $\mathbb C[\alpha,\beta]$. Therefore $F_\varepsilon$ is a finite polynomial map. Equation \eqref{eq:finite-fiber} gives at most four points in each fiber and exactly four in a generic fiber, so the degree is four; it also shows that the zero fiber is $\{0\}$.

Finiteness is preserved under composition, degrees multiply, and induction gives $H_j^{-1}(0)=\{0\}$. For a finite map, the local multiplicities in a fiber sum to the global degree. Since $H_j^{-1}(0)=\{0\}$, the local degree of $H_j$ at the origin equals its global degree. This proves \eqref{eq:degree} and the local-degree assertion.

At the origin,
\[
  \mathrm dF_\varepsilon(0)= \begin{pmatrix}0&0\\0&\varepsilon\end{pmatrix}.
\]
The chain rule gives \eqref{eq:linear}. To prove \eqref{eq:positive-axis}, write
\[
  H_j(t e_2)=\Bigl(z_j(t),w_j(t)\Bigr),\qquad \Bigl(z_0(t),w_0(t)\Bigr)=(0,t).
\]
The recursion is
\[
  z_j(t)=\frac12 w_{j-1}(t)^2,
  \qquad
  w_j(t)=\frac12 z_{j-1}(t)^2+\varepsilon_j w_{j-1}(t).
\]
Induction shows that $z_k(t)$ and $w_k(t)$ are nonnegative real numbers for every $k\ge0$. Consequently,
\[
  w_j(t)\ge\varepsilon_j w_{j-1}(t)
  \ge\cdots\ge c_j t,
\]
which proves \eqref{eq:positive-axis}.
\end{proof}

On $\mathbb C^2$, define
\[
  q_j(\zeta)=2^{-j}\log\|H_j(\zeta)\|_\infty,
\]
with value $-\infty$ at $0$. We use the same symbol for its restriction to $\mathbb D^2$.

\begin{proposition}
Each $q_j$ is negative and plurisubharmonic on $\mathbb D^2$, continuous on $\mathbb D^2\setminus\{0\}$, and tends to $-\infty$ at $0$. Moreover,
\begin{equation}\label{eq:finite-invariants}
  \nu(q_j,0)=2^{-j}, \qquad \MA(q_j)=\delta_0.
\end{equation}
\end{proposition}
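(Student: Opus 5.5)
We prove the four qualitative properties first and then the two invariants in \eqref{eq:finite-invariants}.

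\emph{Qualitative properties.} Since $H_j$ is a holomorphic map, $\log\|H_j\|_\infty=\max_\ell\log|(H_j)_\ell|$ is a maximum of plurisubharmonic functions. Hence $q_j$ is plurisubharmonic on $\mathbb C^2$, and on $\mathbb D^2$ in particular. For negativity, each $F_\varepsilon$ maps $\overline{\mathbb D^2}$ into $\{\|\cdot\|_\infty\le 3/4\}\subset\mathbb D^2$, as shown above. By induction, $\|H_j(\zeta)\|_\infty\le3/4<1$ for $\zeta\in\mathbb D^2$, so $q_j\le 2^{-j}\log(3/4)<0$. By \eqref{eq:degree}, $H_j$ vanishes only at $0$. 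Thus $\|H_j\|_\infty$ is continuous and positive on $\mathbb D^2\setminus\{0\}$, and $q_j$ is continuous there. Finally, $\|H_j(\zeta)\|_\infty\to0$ as $\zeta\to0$, so $q_j\to-\infty$ at $0$, and the unbounded locus of $q_j$ is $\{0\}$.

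\emph{Monge--Amp\`ere measure.} Lemma~\ref{lem:algebra} shows that $H_j$ is finite of degree $4^j$ with $H_j^{-1}(0)=\{0\}$. We apply Lemma~\ref{lem:finite-mass} with $G=H_j$, $U=\mathbb D^2$ and $W=\mathbb D^2$. This is legitimate because $H_j(\mathbb D^2)\subset\mathbb D^2$ and the restriction is still finite: it is proper onto its image, and the push-forward identity only needs multiplicities counted over $\mathbb D^2$. If one is cautious here, use $W=\mathbb C^2$ and $U=H_j^{-1}(\mathbb D^2)\supset\mathbb D^2$, then restrict using the restriction compatibility in Proposition~\ref{prop:question7-product}. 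This gives $\MA(\log\|H_j\|_\infty)=4^j\delta_0$. The homogeneity \eqref{eq:ma-homogeneity} with $c=2^{-j}$ then gives $\MA(q_j)=4^{-j}\cdot4^j\delta_0=\delta_0$.

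\emph{Lelong number.} We use \eqref{eq:lelong-holomorphic-map}. Every component of $H_j$ vanishes at $0$, since $H_j(0)=0$. We need each component to be $\not\equiv0$. If a component vanished identically, the image of $H_j$ would lie in a coordinate line, contradicting finiteness of degree $4^j$ (a finite map between surfaces is surjective onto an open set). It remains to find the minimal vanishing order.
\begin{itemize}
\item By \eqref{eq:linear}, the second component has nonzero linear term $c_j w$, so its order is $1$.
\item Every component has order at least $1$.
\end{itemize}
So the minimum order is $1$, giving $\nu(\log\|H_j\|_\infty,0)=1$. By homogeneity in Lemma~\ref{lem:demailly-lelong}, $\nu(q_j,0)=2^{-j}$.

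The main obstacle is the domain bookkeeping in applying Lemma~\ref{lem:finite-mass} on $\mathbb D^2$ rather than on a neighborhood where $H_j$ is a genuine finite map onto its target. We handle it by working on $\mathbb C^2$, where $H_j$ is a finite polynomial map, and then restricting the measure $4^j\delta_0$ to $\mathbb D^2$.
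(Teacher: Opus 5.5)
Your proposal is correct and follows the paper's proof essentially step for step: plurisubharmonicity from the maximum of moduli, negativity from $H_j(\mathbb D^2)\subset\mathbb D^2$, continuity and the pole from $H_j^{-1}(0)=\{0\}$, $\nu(q_j,0)=2^{-j}$ from the surviving linear term $c_jw$ via \eqref{eq:lelong-holomorphic-map}, and $\MA(q_j)=\delta_0$ from Lemma~\ref{lem:finite-mass} plus \eqref{eq:ma-homogeneity}. Your extra domain bookkeeping refines a point the paper passes over, since $H_j\colon\mathbb D^2\to\mathbb D^2$ is not proper. The cleanest version is the one in your last paragraph: apply Lemma~\ref{lem:finite-mass} with $U=W=\mathbb C^2$ and then restrict to $\mathbb D^2$. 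Your intermediate pairing $W=\mathbb C^2$, $U=H_j^{-1}(\mathbb D^2)$ is not itself a finite map onto $W$.
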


\begin{proof}
The logarithm of the maximum of finitely many moduli of holomorphic functions is plurisubharmonic, so $q_j\in\PSH(\mathbb D^2)$. Since $H_j(\mathbb D^2)\subset\mathbb D^2$, we have $q_j<0$. Moreover, $H_j^{-1}(0)=\{0\}$ by \eqref{eq:degree}; hence $q_j$ is continuous on $\mathbb D^2\setminus\{0\}$ and tends to $-\infty$ at $0$.

Because $H_j$ is finite, neither of its components is identically zero. Both components vanish at $0$, while \eqref{eq:linear} gives $\mathrm dH_j(0)e_2=c_j e_2\ne0$. Thus the minimum of their vanishing orders is one, so $\nu(q_j,0)=2^{-j}$ by \cref{lem:demailly-lelong}. By \cref{lem:algebra,lem:finite-mass} and \eqref{eq:ma-homogeneity},
\[
  \MA(q_j)=2^{-2j}4^j\delta_0=\delta_0.
\]
\end{proof}

\subsection{Comparison and the decreasing limit}

The cutoff level $-4^j$ is matched to the next perturbation scale. Indeed,
\[
  q_j(\zeta)\ge-4^j \quad\Longleftrightarrow\quad\|H_j(\zeta)\|_\infty\ge\e^{-8^j}=4\varepsilon_{j+1}.
\]
We refer to this set as the visible region at stage $j$.

\begin{lemma}
For $\zeta\in\overline{\mathbb D^2}$ and $0<\varepsilon\le\frac{1}{4}$,
\begin{equation}\label{eq:inverse}
  \|F_\varepsilon(\zeta)\|_\infty \ge\frac{1}{8}\|\zeta\|_\infty^4.
\end{equation}
Consequently,
\begin{equation}\label{eq:q-lower}
  q_j(\zeta)\ge 2^j\log\|\zeta\|_\infty-(\log 2)(2^j-2^{-j}).
\end{equation}
\end{lemma}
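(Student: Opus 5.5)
We prove \eqref{eq:inverse} by a two-case split according to which coordinate of $\zeta=(z,w)$ realizes the max-norm, and then iterate it along $H_j$ to obtain \eqref{eq:q-lower}. Write $r=\|\zeta\|_\infty\le1$ and $(\alpha,\beta)=F_\varepsilon(z,w)$, so $\alpha=\tfrac12w^2$ and $\beta=\tfrac12z^2+\varepsilon w$.

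\emph{Case $|w|\ge \tfrac12 r^2$.} Then
\[
|\alpha|=\tfrac12|w|^2\ge\tfrac18 r^4 .
\]
\emph{Case $|w|<\tfrac12 r^2$.} Since $r^2\le r$, we have $|w|<r$, so necessarily $|z|=r$. Using $\varepsilon\le\tfrac14$,
\[
|\beta|\ge\tfrac12 r^2-\varepsilon|w|\ge \tfrac12 r^2-\tfrac18 r^2=\tfrac38 r^2\ge\tfrac18 r^4 .
\]
Either way $\|F_\varepsilon(\zeta)\|_\infty\ge\tfrac18\|\zeta\|_\infty^4$. The only care needed is choosing the threshold $\tfrac12 r^2$ on $|w|$ so that the square term dominates the linear perturbation $\varepsilon w$ in the second case. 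Any threshold of the form $c r^2$ with $c\le 1$ works; the constant $\tfrac18$ comes from $c=\tfrac12$.

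For the iteration, note that $F_\varepsilon(\overline{\mathbb D^2})\subset\mathbb D^2$, so every iterate $H_k(\zeta)$ stays in $\overline{\mathbb D^2}$ and \eqref{eq:inverse} applies at each stage. Setting $a_k=\log\|H_k(\zeta)\|_\infty$ (allowing $-\infty$ at $\zeta=0$, where the claim is trivial), we get
\[
a_k\ge 4a_{k-1}-3\log2, \qquad a_0=\log\|\zeta\|_\infty .
\]
By induction,
\[
a_j\ge 4^j a_0-3\log2\,(1+4+\cdots+4^{j-1})=4^ja_0-(4^j-1)\log 2 .
\]
Multiplying by $2^{-j}$ gives
\[
q_j(\zeta)\ge 2^j\log\|\zeta\|_\infty-(\log2)(2^j-2^{-j}),
\]
which is \eqref{eq:q-lower}. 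I expect no real obstacle here: the case split is the only step requiring thought, and the rest is routine bookkeeping of constants.
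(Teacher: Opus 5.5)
Your proof is correct and matches the paper's argument: the same case split at the threshold $|w|=\tfrac12 r^2$, followed by the same iteration giving $\|H_j(\zeta)\|_\infty\ge 2^{1-4^j}\|\zeta\|_\infty^{4^j}$, then taking logarithms and dividing by $2^j$.
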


\begin{proof}
Write $\zeta=(z,w)$, $r=\|\zeta\|_\infty$, and $\rho=\|F_\varepsilon(\zeta)\|_\infty$. Then
\[
\rho=\max\left\{ \frac{1}{2}|w|^2, \left|\frac{1}{2}z^2+\varepsilon w\right| \right\}.
\]
If $|w|\ge r^2/2$, then
\[
\rho \ge \frac{1}{2}|w|^2 \ge \frac{1}{2}\left(\frac{1}{2}r^2\right)^2=\frac{1}{8}r^4.
\]
Suppose now that $|w|<\frac{1}{2}r^2$. This case forces $r>0$, and since $r\le1$, we have $|w|<r$ and hence $r=|z|$. By the reverse triangle inequality and the assumption $\varepsilon\le\frac{1}{4}$, we obtain
\[
\left|\frac{1}{2}z^2+\varepsilon w\right|
\ge\frac{1}{2}r^2-\varepsilon|w|
>\frac{1}{2}r^2-\frac{1}{8}r^2
=\frac{3}{8}r^2
\ge\frac18r^4.
\]
Thus \eqref{eq:inverse} holds in both cases. Iterating it gives
\[
  \|H_j(\zeta)\|_\infty \ge 2^{1-4^j} \|\zeta\|_\infty^{4^j}.
\]
Taking logarithms and dividing by $2^j$ proves \eqref{eq:q-lower}.
\end{proof}

\begin{lemma}
For every $j\ge 1$,
\begin{equation}\label{eq:q-visible}
  -2^{-j}\log 2 \le q_{j+1}-q_j \le 0 \quad\textup{on }\{q_j\ge-4^j\},
\end{equation}
and
\begin{equation}\label{eq:q-global}
  q_{j+1}\le\max\{q_j,-4^j\} \quad\textup{on }\mathbb D^2.
\end{equation}
\end{lemma}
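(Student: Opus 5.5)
Write $\zeta\in\mathbb D^2$, $(a,b)=H_j(\zeta)$ and $\rho=\|(a,b)\|_\infty$, so that $H_{j+1}(\zeta)=F_{j+1}(a,b)=\bigl(\tfrac12 b^2,\tfrac12 a^2+\varepsilon_{j+1}b\bigr)$ and
\[
  q_{j+1}(\zeta)-q_j(\zeta)=2^{-j-1}\bigl(\log\|F_{j+1}(a,b)\|_\infty-2\log\rho\bigr).
\]
Both inequalities in \eqref{eq:q-visible} therefore amount to comparing $\|F_{j+1}(a,b)\|_\infty$ with $\rho^2$. On the visible region $\{q_j\ge-4^j\}$ we have $\rho\ge 4\varepsilon_{j+1}$, so the linear term is small relative to $\rho$: $\varepsilon_{j+1}\le\rho/4$. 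Also $\rho\le 3/4<1$ because $H_j(\mathbb D^2)\subset\mathbb D^2$ (in fact into the image of $F_j$).

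\emph{Upper bound.} I would show $\|F_{j+1}(a,b)\|_\infty\le\rho^2$. The first component is $\tfrac12|b|^2\le\tfrac12\rho^2$. The second satisfies $\tfrac12|a|^2+\varepsilon_{j+1}|b|\le\tfrac12\rho^2+\tfrac14\rho\cdot\rho=\tfrac34\rho^2$, using $\varepsilon_{j+1}\le\rho/4$. Hence $\log\|F_{j+1}\|_\infty\le2\log\rho$, which gives $q_{j+1}\le q_j$.

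\emph{Lower bound.} I need $\|F_{j+1}(a,b)\|_\infty\ge\tfrac14\rho^2$, since then the difference is at least $2^{-j-1}\log(1/4)=-2^{-j}\log 2$. I split into cases as in the proof of \eqref{eq:inverse}. If $|b|=\rho$, the first component already gives $\tfrac12\rho^2$. Otherwise $|a|=\rho$, and the reverse triangle inequality gives $|\tfrac12 a^2+\varepsilon_{j+1}b|\ge\tfrac12\rho^2-\tfrac14\rho^2=\tfrac14\rho^2$. This step is where the matching of the cutoff $-4^j$ with $\varepsilon_{j+1}$ is used.

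\emph{Global inequality} \eqref{eq:q-global}. On the visible region this follows from the upper bound just proved. On the complement $\rho<4\varepsilon_{j+1}=\e^{-8^j}$, and I must show $q_{j+1}\le-4^j$, i.e.\ $\|F_{j+1}(a,b)\|_\infty\le \e^{-2\cdot 8^j}=\e^{-4^j\cdot2^{j+1}}$. The estimate in this regime is the main point to check, since the linear term now dominates. The components are bounded by $\tfrac12\rho^2$ and $\tfrac12\rho^2+\varepsilon_{j+1}\rho$. With $\rho<4\varepsilon_{j+1}$ the second is less than $\tfrac12\rho^2+\tfrac14\rho^2<\rho\cdot 4\varepsilon_{j+1}$ and also at most $\rho^2\cdot$const, so $\|F_{j+1}\|_\infty<(4\varepsilon_{j+1})^2=\e^{-2\cdot8^j}$. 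Taking $2^{-j-1}\log$ then gives $q_{j+1}<2^{-j-1}\cdot(-2\cdot 8^j)=-4^j$. The only care needed is that $\rho=0$ (that is, $\zeta=0$) is trivial, since both sides equal $-\infty$ there.
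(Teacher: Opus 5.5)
Your argument on the visible region $\{q_j\ge-4^j\}$ is the paper's argument: the same reduction to comparing $\|F_{j+1}(H_j(\zeta))\|_\infty$ with $\rho^2$, the same upper bound $\tfrac34\rho^2$, and the same two-case lower bound $\tfrac14\rho^2$. That part is correct, and so is the deduction of \eqref{eq:q-global} there. The gap is in the complementary region $\rho<4\varepsilon_{j+1}$, which you yourself flag as the point to check.

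There you assert that the second component is less than $\tfrac12\rho^2+\tfrac14\rho^2$ and at most a constant times $\rho^2$. Both claims are false. The bound $\varepsilon_{j+1}|b|\le\tfrac14\rho^2$ came from $\varepsilon_{j+1}\le\rho/4$, which is exactly the inequality that fails off the visible region; there one has instead $\varepsilon_{j+1}\rho>\tfrac14\rho^2$.

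Concretely, take $\zeta=te_2$ with $t\to0$. Then $H_j(te_2)=(O(t^2),\,c_jt+O(t^2))$, so $\rho\sim c_jt$, while the second component of $F_{j+1}(H_j(te_2))$ is $\sim\varepsilon_{j+1}c_jt$. Its ratio to $\rho^2$ blows up. This is just your own remark that the linear term dominates.

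The conclusion survives, but you must estimate against the scale $4\varepsilon_{j+1}$ rather than $\rho$. Since $\rho<4\varepsilon_{j+1}$,
\[
  \tfrac12|a|^2+\varepsilon_{j+1}|b|\le\tfrac12\rho^2+\varepsilon_{j+1}\rho<\tfrac12(4\varepsilon_{j+1})^2+\tfrac14(4\varepsilon_{j+1})^2=\tfrac34(4\varepsilon_{j+1})^2,
\]
and $\tfrac12|b|^2<\tfrac12(4\varepsilon_{j+1})^2$. Hence $\|F_{j+1}(a,b)\|_\infty<\tfrac34\e^{-2\cdot8^j}$, and therefore $q_{j+1}<-4^j$. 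This is the paper's estimate $R_j^2(x^2/2+x/4)<\tfrac34R_j^2$ with $\rho=R_jx$.

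Equivalently, your intended chain through $\rho\cdot4\varepsilon_{j+1}$ works if, for $\rho>0$, you write $\tfrac12\rho^2<2\varepsilon_{j+1}\rho$. This bounds the second component by $3\varepsilon_{j+1}\rho<(4\varepsilon_{j+1})^2$.

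A minor slip: at $\zeta=0$ the right side of \eqref{eq:q-global} is $-4^j$, not $-\infty$. The inequality is still trivial there because the left side is $-\infty$.
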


\begin{proof}
At $\zeta=0$, the global assertion is immediate and the visible-set assertion is vacuous. Assume henceforth that $\zeta\ne0$, write $H_j(\zeta)=(z,w)$, and put $r=\max\{|z|,|w|\}$. Then
\begin{equation}\label{eq:qjp1qj}
  q_{j+1}(\zeta)-q_j(\zeta)=2^{-j-1} \log\frac{\|F_{j+1}(H_j(\zeta))\|_\infty}{r^2}.
\end{equation}
If $q_j(\zeta)\ge-4^j$, then $r\ge \e^{-8^j}$. Since $\varepsilon_{j+1}=\e^{-8^j}/4\le r/4$, the first component of $F_{j+1}(z,w)$ has modulus at most $r^2/2$, while the second has modulus at most
\[
  \frac{1}{2}r^2+\varepsilon_{j+1}r
  \le\frac{3}{4}r^2.
\]
Therefore
\[
  \|F_{j+1}(z,w)\|_\infty\le r^2.
\]
If $r=|w|$, the first component is $r^2/2$; if $r=|z|$, then
\[
  \left|\frac{1}{2}z^2+\varepsilon_{j+1}w\right|
  \ge\frac{1}{2}r^2-\varepsilon_{j+1}r
  \ge\frac{1}{4}r^2.
\]
Thus the quotient in the logarithm in \eqref{eq:qjp1qj} lies in $[1/4,1]$, which gives \eqref{eq:q-visible} and the global upper bound \eqref{eq:q-global} on this set.

If $q_j(\zeta)<-4^j$, set $R_j=\e^{-8^j}$ and write $r=R_j x$ with $0<x<1$. Since $\varepsilon_{j+1}=R_j/4$,
\[
  \|F_{j+1}(z,w)\|_\infty
  \le R_j^2\left(\frac{x^2}{2}+\frac{x}{4}\right)
  <\frac{3}{4}R_j^2
  <R_j^2=\e^{-2\cdot8^j},
\]
and hence
\[
  q_{j+1}(\zeta)
  <-4^j.
\]
Together the two cases prove \eqref{eq:q-global}.
\end{proof}

Define
\begin{equation}\label{eq:Vj}
  V_j=\max\{q_j,-4^j\}
\end{equation}
on $\mathbb D^2$, and define its cutoff core by
\[
  K_j=\{\zeta\in\overline{\mathbb D^2}:q_j(\zeta)\le-4^j\}.
\]
Finally, define
\begin{equation}\label{eq:u}
  u=\inf_{j\ge1}V_j.
\end{equation}

\begin{proposition}\label{prop:limit}
Each $V_j$ is a bounded negative plurisubharmonic function, and the sequence decreases pointwise to $u$.
The limit is negative and plurisubharmonic on $\mathbb D^2$, continuous on $\mathbb D^2\setminus\{0\}$, and
\[
  u^{-1}(-\infty)=\{0\}.
\]
Moreover, $q_j\to u$ locally uniformly on $\mathbb D^2\setminus\{0\}$, and $\e^u$ is continuous on $\mathbb D^2$.
\end{proposition}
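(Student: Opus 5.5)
The plan is to use the two comparison lemmas to get monotonicity of the $V_j$, and then to use the lower bound \eqref{eq:q-lower} together with the shrinking of the cores $K_j$ to get uniform control away from the origin.

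\textbf{Step 1: each $V_j$ is bounded, negative and plurisubharmonic.} Each $V_j$ is plurisubharmonic, being the maximum of the plurisubharmonic $q_j$ and a constant. It is bounded below by $-4^j$, and it is negative because $q_j<0$.

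\textbf{Step 2: the sequence decreases.} By \eqref{eq:q-global} we have $q_{j+1}\le\max\{q_j,-4^j\}=V_j$. Also $-4^{j+1}<-4^j\le V_j$. Hence $V_{j+1}\le V_j$. It follows that $u=\lim V_j$ is either plurisubharmonic or identically $-\infty$. Since $q_j\le V_j$, we get $u(0)\le q_j(0)=-\infty$ for every $j$, so $u(0)=-\infty$, and $u$ is negative.

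\textbf{Step 3: the cores shrink and $q_j$ converges locally uniformly away from $0$.} Fix $\delta\in(0,1)$ and work on the shell $A_\delta=\{\delta\le\|\zeta\|_\infty<1\}$. By \eqref{eq:q-lower},
\[
q_j\ge 2^j(\log\delta-\log2)\quad\text{on }A_\delta .
\]
This exceeds $-4^j$ once $2^j>\log(2/\delta)$, so $K_j\cap A_\delta=\emptyset$ for $j\ge j_0(\delta)$. In particular $K_j\subset\{\|\zeta\|_\infty\le 2\e^{-2^j}\}$, so the cores shrink to $0$. For $j\ge j_0$ we are therefore on the visible set of every later stage. On $A_\delta$ we then have $V_j=q_j$, and \eqref{eq:q-visible} gives
\[
0\le q_j-q_{j+1}\le 2^{-j}\log2 .
\]
The series $\sum_j 2^{-j}\log 2$ converges, so $q_j$ is uniformly Cauchy on $A_\delta$. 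Its limit equals $\lim V_j=u$ there. Each $q_j$ is finite and continuous on $A_\delta$, so $u$ is finite and continuous on $\mathbb D^2\setminus\{0\}$. This gives $u^{-1}(-\infty)=\{0\}$, and $u\not\equiv-\infty$, so $u\in\PSH(\mathbb D^2)$. The bound $|u-q_j|\le 2^{1-j}\log 2$ on $A_\delta$ for $j\ge j_0(\delta)$ is exactly the locally uniform convergence $q_j\to u$ on $\mathbb D^2\setminus\{0\}$.

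\textbf{Step 4: continuity of $\e^u$ at the origin.} This is the one point needing care, since it is the only place where the uniform estimates break down. Continuity elsewhere follows from Step 3, so it remains to show $u(\zeta)\to-\infty$ as $\zeta\to0$. I would use $u\le V_j$ for every $j$, together with upper bounds near $0$. Write $\|H_j(\zeta)\|_\infty\le C_j\|\zeta\|_\infty$ near $0$; this holds because $H_j$ is a polynomial map vanishing at $0$. Then
\[
u\le \max\{2^{-j}\log(C_j\|\zeta\|_\infty),\,-4^j\}.
\]
As $\zeta\to0$, the first term tends to $-\infty$, so $\limsup_{\zeta\to0}u\le-4^j$. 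Letting $j\to\infty$ gives $u(\zeta)\to-\infty$, hence $\e^{u}\to0=\e^{u(0)}$. This estimate could also be obtained directly from upper semicontinuity of $u$ at $0$ with $u(0)=-\infty$.
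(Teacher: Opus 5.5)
Your proof is correct and follows the paper's argument essentially step for step: monotonicity from \eqref{eq:q-global}, shrinking cores via \eqref{eq:q-lower}, uniform Cauchy estimates from \eqref{eq:q-visible} away from the cores, and $u\le V_j$ near the pole. Your closing remark is also a valid shortcut, since $u(\zeta)\to-\infty$ already follows from upper semicontinuity of $u$ at $0$ and $u(0)=-\infty$. One slip needs fixing: in Step~2, $q_j\le V_j$ does not give $u(0)\le q_j(0)$, because the inequality points the wrong way; use $u(0)\le V_j(0)=-4^j$ for every $j$ instead.
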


\begin{proof}
Each $V_j$ is bounded and plurisubharmonic. Since $H_j(\mathbb D^2)\subset\mathbb D^2$, both branches in \eqref{eq:Vj} are negative, so $V_j<0$. Moreover, \eqref{eq:q-global} gives $q_{j+1}\le V_j$, while
\[
  -4^{j+1}\le-4^j\le V_j.
\]
Taking the maximum of these two inequalities proves $V_{j+1}\le V_j$. Thus $V_j$ decreases pointwise to the function $u$ defined in \eqref{eq:u}.

If $\zeta\in K_j$, then \eqref{eq:q-lower} gives
\[
  2^j\log\|\zeta\|_\infty \le-4^j+(\log 2)\left(2^j-2^{-j}\right) \le-4^j+2^j\log 2,
\]
and therefore
\begin{equation}\label{eq:core}
  \|\zeta\|_\infty
  \le 2\e^{-2^j}.
\end{equation}

Moreover, $K_j=\{\zeta\in\overline{\mathbb D^2}:\e^{q_j(\zeta)}\le\e^{-4^j}\}$ is closed in $\overline{\mathbb D^2}$,
because $\e^{q_j}$ is continuous there. Thus the cores lie in closed max-norm balls whose radii tend to zero. Since $2\e^{-2}<1$, every $K_j$ is compactly contained in $\mathbb D^2$.

Let $L\Subset \mathbb D^2\setminus\{0\}$. By \eqref{eq:core}, there is an $N$ such that $L\cap K_k=\varnothing$ for every $k\ge N$. Hence \eqref{eq:q-visible} applies at each later stage, and for $\ell>k\ge N$,
\[
  0\le q_k-q_\ell
  \le\sum_{s=k}^{\ell-1}2^{-s}\log 2
  \le2^{1-k}\log 2
  \qquad\text{on }L.
\]
Thus $q_j$ is uniformly Cauchy on $L$. The cutoff in \eqref{eq:Vj} is also inactive on $L$ for $j\ge N$, so $V_j=q_j$ there. Therefore $V_j$ and $q_j$ have the same locally uniform limit on the punctured domain.

A decreasing limit of plurisubharmonic functions is plurisubharmonic unless it is identically $-\infty$. The punctured limit is finite, so that alternative is excluded, and $u\le V_1<0$. Finally,
\[
  V_j(0)=-4^j\longrightarrow-\infty.
\]
The locally uniform limit of the continuous functions $q_j$ is continuous away from $0$. To prove continuity at the pole, fix $A>0$ and choose $j$ with $4^j>A$. Since $\e^{q_j}$ is continuous and vanishes only at $0$, the core $K_j$ contains a neighborhood of $0$. On this neighborhood,
\[
  u\le V_j=-4^j<-A.
\]
Thus $u(\zeta)\to-\infty$ as $\zeta\to0$, so $\e^u$ is continuous at $0$ as well.
\end{proof}

\subsection{The atomic mass}

We now identify the limiting Monge--Amp\`ere measure.

\begin{proposition}\label{prop:mass}
The Monge--Amp\`ere measure of $u$ satisfies
\[
  \MA(u)=\delta_0.
\]
\end{proposition}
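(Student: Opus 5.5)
The plan is to compute $\MA(V_j)$ exactly for each $j$ and then pass to the decreasing limit. First apply \cref{lem:sweeping} with $p=2$, $X=\mathbb D^2$, $\varphi=q_j$, $m=1$ and $r=-4^j$. The hypotheses are supplied by earlier results. We know $\MA(q_j)=\delta_0$ by \eqref{eq:finite-invariants}, and the unbounded locus of $q_j$ is $\{0\}$. The function $\e^{q_j}=\|H_j\|_\infty^{2^{-j}}$ is continuous.

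For the condition $\{q_j<R\}\Subset\mathbb D^2$, the bound \eqref{eq:q-lower} shows that for any $R$ slightly above $-4^j$ the sublevel set lies in a max-norm ball of radius about $2\e^{-2^j}<1$. This is the same computation that gave \eqref{eq:core}, so we may take $R=-4^j+\tfrac12$, for example. The lemma then shows that $\MA(V_j)$ is a positive measure of total mass $1$ supported on $\{q_j=-4^j\}\subset K_j$. Since the $V_j$ are bounded, this measure is the Bedford--Taylor measure, and it agrees with $\MA(V_j)$ in the sense of \cref{prop:question7-product}.

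Next show that $\MA(V_j)\rightharpoonup\delta_0$. By \eqref{eq:core}, $\operatorname{supp}\MA(V_j)\subset\{\|\zeta\|_\infty\le 2\e^{-2^j}\}$. For $\chi\in C_c(\mathbb D^2)$ this gives
\[
  \Bigl|\int\chi\,\mathrm d\MA(V_j)-\chi(0)\Bigr|\le\sup_{\|\zeta\|_\infty\le 2\e^{-2^j}}|\chi(\zeta)-\chi(0)|,
\]
and the right-hand side tends to $0$ by continuity of $\chi$, because each $\MA(V_j)$ is a probability measure.

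Finally, \cref{prop:limit} shows that $V_j\downarrow u$ with $u$ plurisubharmonic and unbounded locus $\{0\}$, which is discrete. \cref{prop:question7-product} therefore applies: $\MA(u)=\mu_u$ is well defined and $\MA(V_j)=\mu_{V_j}\rightharpoonup\mu_u$. Uniqueness of weak limits of Radon measures then gives $\MA(u)=\delta_0$.

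The only delicate point is checking the hypotheses of \cref{lem:sweeping}, namely the compactness of the sublevel set $\{q_j<R\}$ for some $R>r$ and the continuity of $\e^{q_j}$ up to where it is needed. I would handle the first directly through \eqref{eq:q-lower} rather than through $K_j$ itself, since $K_j$ only covers the level $R=-4^j$. The remaining steps are formal once the total mass $1$ has been confirmed.
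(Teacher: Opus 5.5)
Your proof is correct and follows the paper's argument essentially step for step. You sweep at level $-4^j$ to get a probability measure on $\{q_j=-4^j\}\subset K_j$, use \eqref{eq:core} to show these supports shrink to the origin, and conclude by monotone continuity from \cref{prop:question7-product}. The only difference is cosmetic. The paper secures $\{q_j<R\}\Subset\mathbb D^2$ by taking $R=s_j$ between $-4^j$ and $\min_{\partial\mathbb D^2}q_j$, whereas you take $R=-4^j+\tfrac12$ and use \eqref{eq:q-lower} to place the sublevel set inside the max-norm ball of radius $2\e^{-2^j+2^{-j-1}}<1$, which works equally well.
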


\begin{proof}
By \cref{prop:limit,prop:question7-product}, the Monge--Amp\`ere measure of $u$ is defined. The bound \eqref{eq:core} gives $K_j\Subset\mathbb D^2$.
The function $\e^{q_j}=\|H_j\|_\infty^{2^{-j}}$ is continuous on $\overline{\mathbb D^2}$ and vanishes only at $0$.
On the boundary $\partial \mathbb D^2$, equation \eqref{eq:q-lower} gives
\[
  q_j\ge-(\log 2)\left(2^j-2^{-j}\right)>-4^j.
\]
We may therefore choose
\[
  -4^j<s_j<\min_{\partial\mathbb D^2}q_j, \qquad \left\{q_j<s_j\right\}\Subset\mathbb D^2.
\]
Apply \cref{lem:sweeping} with $\varphi=q_j$, $r=-4^j$, and $R=s_j$. In view of \eqref{eq:finite-invariants}, $\MA(V_j)$ is a probability measure supported on $\{q_j=-4^j\}\subset K_j$.

For every $\chi\in C_c(\mathbb D^2)$,
\[
  \left|\int_{\mathbb D^2}\chi\,\MA(V_j)-\chi(0)\right|\le\sup_{K_j}\left|\chi-\chi(0)\right| \longrightarrow 0.
\]
Hence
\begin{equation}\label{eq:swept-limit}
  \MA(V_j)\rightharpoonup\delta_0.
\end{equation}
On the other hand, $V_j\downarrow u$ and the unbounded locus of $u$ is $\{0\}$. The monotone convergence in \cref{prop:question7-product} gives
\[
  \MA(V_j)\rightharpoonup\MA(u).
\]
Comparison with \eqref{eq:swept-limit} proves the proposition.
\end{proof}

\subsection{Evaluation of the Lelong number}\label{sec:lelong}

Lelong numbers need not pass continuously to decreasing limits, so we construct points at which every later truncation is inactive. We first record a consequence of the one-step estimate.

\begin{lemma}\label{lem:untruncated-region}
Let $j\ge1$ and $\zeta\in\mathbb D^2$.
If $q_j(\zeta)>-4^j$, then
\begin{equation}\label{eq:inactive-cutoffs}
  q_k(\zeta)>-4^k
  \qquad(k\ge j),
\end{equation}
and
\begin{equation}\label{eq:tail-bound}
  0\le q_j(\zeta)-u(\zeta)\le2^{1-j}\log 2.
\end{equation}
\end{lemma}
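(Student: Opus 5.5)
The plan is an induction on $k$ for \eqref{eq:inactive-cutoffs}, followed by telescoping the one-step estimate \eqref{eq:q-visible} to get \eqref{eq:tail-bound}.

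\textbf{Induction.} Assume $q_k(\zeta)>-4^k$ for some $k\ge j$. Then $\zeta$ lies in the visible region at stage $k$, so \eqref{eq:q-visible} gives
\[
  q_{k+1}(\zeta)\ge q_k(\zeta)-2^{-k}\log 2>-4^k-2^{-k}\log 2 .
\]
It then suffices to check $-4^k-2^{-k}\log2\ge-4^{k+1}$, that is, $2^{-k}\log 2\le 3\cdot4^k$. This holds trivially for $k\ge1$. Hence $q_{k+1}(\zeta)>-4^{k+1}$, and \eqref{eq:inactive-cutoffs} follows. Note that the strict inequality propagates, and that $\zeta\neq0$ automatically because $q_j(\zeta)$ is finite.

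\textbf{Tail bound.} By \eqref{eq:inactive-cutoffs}, the cutoff in \eqref{eq:Vj} is inactive for every $k\ge j$, so $V_k(\zeta)=q_k(\zeta)$. Since the $V_k$ decrease to $u$ (\cref{prop:limit}), we get
\[
  u(\zeta)=\lim_{k\to\infty}q_k(\zeta)\le q_j(\zeta),
\]
which is the lower inequality $0\le q_j(\zeta)-u(\zeta)$. For the upper inequality, apply \eqref{eq:q-visible} at each stage $s$ from $j$ to $k-1$; this is legitimate because every such $s$ is visible. Telescoping gives
\[
  q_j(\zeta)-q_k(\zeta)\le\sum_{s=j}^{k-1}2^{-s}\log2\le2^{1-j}\log 2 .
\]
Letting $k\to\infty$ yields \eqref{eq:tail-bound}.

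\textbf{Main obstacle.} The only delicate point is checking that the visible-region hypothesis of \eqref{eq:q-visible} really holds at every later stage. The induction supplies exactly this, thanks to the large margin between the cutoff levels $-4^k$ and $-4^{k+1}$ compared with the step size $2^{-k}\log2$.
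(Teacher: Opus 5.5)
Your proposal is correct and follows essentially the same route as the paper: induction via \eqref{eq:q-visible} to show every later cutoff is inactive, then telescoping the one-step bound and letting $k\to\infty$. Your explicit remarks that $V_k(\zeta)=q_k(\zeta)$ for $k\ge j$, so that $u(\zeta)=\lim_k q_k(\zeta)$, and that the margin check $2^{-k}\log 2\le 3\cdot 4^k$ holds, only spell out what the paper leaves implicit.
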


\begin{proof}
If $q_k(\zeta)>-4^k$, then \eqref{eq:q-visible} gives
\[
  q_{k+1}(\zeta)
  >-4^k-2^{-k}\log 2
  >-4^{k+1}.
\]
Induction proves \eqref{eq:inactive-cutoffs}. Thus none of the later truncations acts at $\zeta$. For $m>j$, successive applications of \eqref{eq:q-visible} give
\[
  0\le q_j(\zeta)-q_m(\zeta)
  \le\sum_{k=j}^{m-1}2^{-k}\log 2
  \le2^{1-j}\log 2.
\]
Letting $m\to\infty$ proves \eqref{eq:tail-bound}.
\end{proof}

\begin{proposition}\label{prop:lelong}
The function $u$ in \eqref{eq:u} satisfies
\[
  \nu(u,0)=0.
\]
\end{proposition}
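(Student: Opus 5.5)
Since $u\le0$ near $0$ and $\log\|\zeta\|_\infty<0$, every quotient $u(\zeta)/\log\|\zeta\|_\infty$ is nonnegative. So by the liminf characterization \eqref{eq:lelong-liminf} of \cref{lem:demailly-lelong}, it is enough to produce points $\zeta_j\to0$ along which this quotient tends to $0$. We take these witnesses on the positive real ray in the direction $e_2$, where \eqref{eq:positive-axis} controls $\|H_j\|_\infty$ from below. Concretely, put $\zeta_j=t_je_2$ with $t_j\in(0,1)$ chosen so that the stage-$j$ cutoff is just inactive at $\zeta_j$. By \eqref{eq:positive-axis},
\[
  q_j(te_2)\ge 2^{-j}\bigl(\log c_j+\log t\bigr),
\]
and $\log c_j=-j\log4-\tfrac{8^j-1}{7}$. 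We choose $\log t_j=-A_j$, with $A_j$ large compared with $|\log c_j|\approx 8^j/7$ but with $2^{-j}A_j$ still well below $4^j$.

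The key condition is $2^{-j}(A_j+|\log c_j|)<4^j$, that is, $A_j+|\log c_j|<8^j$. Since $|\log c_j|\approx 8^j/7$, the choice $A_j=\tfrac12 8^j$ works for all $j$ large, because $\tfrac12+\tfrac17<1$. With this choice $q_j(\zeta_j)>-4^j$. \Cref{lem:untruncated-region} then applies, and \eqref{eq:tail-bound} gives
\[
  u(\zeta_j)\ge q_j(\zeta_j)-2^{1-j}\log2\ge -2^{-j}\bigl(A_j+|\log c_j|\bigr)-2^{1-j}\log 2.
\]

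Dividing by $\log\|\zeta_j\|_\infty=-A_j$ gives
\[
  0\le\frac{u(\zeta_j)}{\log\|\zeta_j\|_\infty}\le 2^{-j}\Bigl(1+\frac{|\log c_j|}{A_j}\Bigr)+\frac{2^{1-j}\log2}{A_j}.
\]
Here $|\log c_j|/A_j$ stays bounded, by roughly $2/7$ plus lower-order terms, so the right-hand side tends to $0$. Also $\zeta_j\to0$ because $A_j\to\infty$. Hence the liminf in \eqref{eq:lelong-liminf} is $0$, and $\nu(u,0)=0$.

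The main obstacle is balancing the choice of $t_j$. If $t_j$ is too small, the cutoff at stage $j$ becomes active and \cref{lem:untruncated-region} no longer applies. If $t_j$ is too large, the fixed cost $|\log c_j|$ dominates $\log t_j$, and the ratio no longer carries the factor $2^{-j}$. The fact that $\sum_k 8^{k-1}$ is a constant fraction of $8^j$ is exactly what lets a single exponent $A_j\asymp 8^j$ satisfy both constraints.
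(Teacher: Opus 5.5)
Your proof is correct and follows the paper's argument. Both take witnesses on the positive $e_2$-ray, use \eqref{eq:positive-axis} to keep the stage-$j$ cutoff inactive, and then apply \cref{lem:untruncated-region} and the liminf formula. The only difference is the scale of the witnesses: the paper takes $t_j=c_j$, so that $\|H_j(c_je_2)\|_\infty\ge c_j^2$ and the quotient is bounded by essentially $2^{1-j}$, whereas your $t_j=\e^{-8^j/2}$ works equally well because $\tfrac12+\tfrac17<1$.
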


\begin{proof}
For $j\ge2$, set
\[
  b_j=-q_j(c_j e_2).
\]
Since $0<c_j<1$, the point $c_j e_2$ belongs to $\mathbb D^2$. By \eqref{eq:positive-axis},
\[
  \|H_j(c_j e_2)\|_\infty\ge c_j^2.
\]
Consequently,
\[
  0\le b_j\le2^{1-j}(-\log c_j),
\]
where
\[
  -\log c_j=j\log 4+\frac{8^j-1}{7}.
\]
Hence
\[
  2^{1-j}\left(-\log c_j\right)=\left(\frac{2}{7}+o(1)\right)4^j<4^j
\]
for all sufficiently large $j$. Thus $q_j(c_j e_2)>-4^j$, and \cref{lem:untruncated-region} gives
\[
  u(c_j e_2)\ge-b_j-2^{1-j}\log 2.
\]
Since $u\le0$ and $\log c_j<0$, it follows that
\[
  0\le\frac{u(c_j e_2)}{\log c_j}\le \frac{b_j+2^{1-j}\log 2}{-\log c_j}\le 2^{1-j}+\frac{2^{1-j}\log 2}{-\log c_j}\longrightarrow 0.
\]
Finally, $c_j e_2\to0$. The liminf formula in \cref{lem:demailly-lelong} therefore gives $\nu(u,0)=0$.
\end{proof}

\begin{proof}[Proof of \cref{thm:main}]
The plurisubharmonic function $u$ is defined on $\mathbb D^2$ by \eqref{eq:u}. By \cref{prop:limit}, it is negative with an isolated pole at $0$, and $\e^u$ is continuous on $\mathbb D^2$. Finally, \cref{prop:lelong,prop:mass} give $\nu(u,0)=0$ and $\MA(u)=\delta_0$. Together, these conclusions prove the theorem.
\end{proof}

\subsection{Higher dimensions}

Let $n\ge3$, put $m=n-2$, and identify
\[
  \mathbb D^n=\mathbb D^2\times\mathbb D^m.
\]
Write the coordinates as $(\zeta,\eta)$ and let
\[
  v(\eta)=\log\|\eta\|_\infty,
  \qquad
  u_n(\zeta,\eta)=\max\{u(\zeta),v(\eta)\},
\]
where $u$ is the function in \cref{thm:main}.
Then this candidate $u_n$ will be the desired function. 

\begin{proof}[Proof of \cref{cor:higher-dimensional}]

First the function $u_n$ is negative and plurisubharmonic, and its pole set is $\{0\}$. Moreover,
\[
  \e^{u_n(\zeta,\eta)}=\max\{\e^{u(\zeta)},\|\eta\|_\infty\},
\]
so $\e^{u_n}$ is continuous.
Then we compute its Lelong number at the origin. Since $\nu(v,0)=1$, the max formula gives
\[
  \nu(u_n,0)=\min\{\nu(u,0),\nu(v,0)\}=0.
\]

It remains to compute the Monge--Amp\`ere measure. Fix $0<r<1$ and work on
\[
  \Omega_1=r\mathbb D^2,\qquad \Omega_2=r\mathbb D^m.
\]
The functions $u$ and $v$ are continuous and finite on their respective boundaries. We may therefore choose
\[
  R_1<\min_{\partial\Omega_1}u,\qquad R_2<\min_{\partial\Omega_2}v
\]
so that $\{u<R_1\}\Subset\Omega_1$ and $\{v<R_2\}\Subset\Omega_2$. For $T$ sufficiently large that $-T<\min\{R_1,R_2\}$, define the bounded nonnegative plurisubharmonic functions
\[
  u_T=\max\{u,-T\}+T \quad\text{on }\Omega_1,
  \qquad
  v_T=\max\{v,-T\}+T \quad\text{on }\Omega_2.
\]
Since $\MA(u)=\delta_0$ and $(\ddc v)^m=\delta_0$ by \cite[Chapter~III, Corollary~7.4]{DemaillyCADG}, the sweeping formula in \cref{lem:sweeping} shows that $\MA(u_T)$ and $(\ddc v_T)^m$ are probability measures supported, respectively, on
\[
  \{u=-T\}\subset\{u_T=0\}=\{u\le-T\},
  \qquad
  \{v=-T\}\subset\{v_T=0\}=\{v\le-T\}.
\]
In particular, neither measure charges the set on which its potential is positive. B{\l}ocki's max-product identity \cite[Theorem~7]{Blo00}, which is also the identity used in the proof of \cite[Theorem~5.7]{Wik05}, therefore gives
\begin{equation}\label{eq:truncated-product}
  \bigl(\ddc\max\{u_T(\zeta),v_T(\eta)\}\bigr)^n
  =(\ddc u_T)^2\wedge(\ddc v_T)^m
  \quad\text{on }\Omega_1\times\Omega_2.
\end{equation}
Here the right-hand side is the product of the two pulled-back measures. On the other hand,
\[
  \max\{u_T(\zeta),v_T(\eta)\}
  =\max\{u_n(\zeta,\eta),-T\}+T.
\]
The right-hand side of \eqref{eq:truncated-product} is thus a probability measure, and its support is contained in
\[
  \{u\le-T\}\times\{v\le-T\}.
\]
Since $\e^u$ and $\e^v=\|\eta\|_\infty$ are continuous and vanish only at their respective origins, the supports of these product measures shrink to the origin as $T\to\infty$; hence the measures converge weakly to $\delta_0$. The functions $\max\{u_n,-T\}$ decrease to $u_n$, whose unbounded locus is the single point $0$. Monotone continuity for the Monge--Amp\`ere operator at isolated poles \cite[Chapter~III, Corollary~4.3]{DemaillyCADG} now yields
\[
  \MA(u_n)=\delta_0
  \quad\text{on }r\mathbb D^n.
\]
Since $r<1$ was arbitrary, the same identity holds on $\mathbb D^n$. This completes the proof.
\end{proof}

Finally, we remark that in higher dimension a counterexample can also be constructed via a recursion process similar to that in dimension $2$.

\printbibliography

Long Li, \textsc{Institute of Mathematical Sciences, ShanghaiTech University}\par\nopagebreak
  \textit{Email address:} \texttt{lilong1@shanghaitech.edu.cn}

Mingchen Xia, \textsc{Institute of Geometry and Physics, University of Science and Technology of China}\par\nopagebreak
  \textit{Email address:} \texttt{xiamingchen2008@gmail.com}

\end{document}